\providecommand{\printnomenclature}{\printglossary}
\providecommand{\makenomenclature}{\makeglossary}
\newcommand\eqref[1]{(\ref{#1})}
\newtheorem{rem}[theorem]{Remark}
\begin{document}

\title{Analytical Equilibrium Solutions of Biochemical Systems with Synthesis
and Degradation }

\author{I.~Mirzaev\thanks{Applied Mathematics, Univeristy of Colorado, Boulder, CO 80309-0526}
\and D.~M.~Bortz$^{*}$\thanks{Corresponding author (dmbortz@colorado.edu)}}
\maketitle
\begin{abstract}
Analyzing qualitative behaviors of biochemical reactions using its
associated network structure has proven useful in diverse branches
of biology. As an extension of our previous work, we introduce a graph-based
framework to calculate steady state solutions of biochemical reaction
networks with synthesis and degradation. Our approach is based on
a labeled directed graph \nomenclature{$G$}{Labeled directed graph (or simply digraph)}$G$
and the associated system of linear non-homogeneous differential equations
with first order degradation and zeroth order synthesis. We also present
a theorem which provides necessary and sufficient conditions for the
dynamics to engender a unique stable steady state. 

Although the dynamics are linear, one can apply this framework to
nonlinear systems by encoding nonlinearity into the edge labels. We
answer open question from our previous work concerning the non-positiveness
of the elements in the inverse of a perturbed Laplacian matrix. Moreover,
we provide a graph theoretical framework for the computation of the
inverse of a such matrix. This also completes our previous framework
and makes it purely graph theoretical. Lately, we demonstrate the
utility of this framework by applying it to a mathematical model of
insulin secretion through ion channels and glucose metabolism in pancreatic
$\beta$-cells.
\end{abstract}

\section{Introduction}

In recent years, many researchers have devoted their efforts to developing
a systems-level understanding of biochemical reaction networks. In
particular, the study of these chemical reaction networks (CRNs) using
their associated graph structure has attracted considerable attention.
The work led by Craciun and Feinberg on multistationarity \cite{Craciun2005,Craciun2010,Craciun2006,CraciunFeinberg2006}
and the work led by Mincheva and Roussel on stable oscillations \cite{Mincheva2011a,Mincheva2007a,Mincheva2007}
are two particularly influential approaches. For a good overview of
the various graph theoretic developments, we direct the interested
reader to the review provided in Domijan and Kirkilionis \cite{Domijan2008}.

In this work, we focus on applications of graph theory, mainly the
Matrix-Tree Theorem (MTT), for deriving equilibrium \nomenclature{ES}{Equilibrium solution (Steady state solution)}solutions
(ES) for CRNs that fit within a Laplacian dynamics framework. The
MTT-based framework was first applied in a biological context by King
and Altman \cite{King1956} to derive steady state rate equations
in enzyme kinetics. This framework was then simplified and summarized
into rules (known as \emph{Chou's graphical rules} \cite{Lin2013})
by Chou and coworkers \cite{Chou1989,Chou1990,Chou1993}. Chou \cite{Chou1989}
has also extended the framework for non-steady state enzyme-catalyzed
systems. 

The main disadvantage of Chou's graphical rules is that they are only
applicable if the underlying digraph structure is \emph{strongly connected},
i.e., every vertex is reachable from every other vertex. This issue
was solved and extended for general directed graphs (\emph{digraphs})
by Mirzaev and Gunawardena in 2013 \cite{MirzaevGunawardena2013bmb}
and is applicable to the specific class of linear ordinary differential
equations (ODEs) known as \emph{Laplacian dynamics}. Systems described
by Laplacian dynamics are created using a weakly connected digraph,
$G$, with $n$ vertices, with labelled, directed edges, and without
self loops. Note that by\emph{ weakly connected }we mean that the
graph cannot be expressed as the union of two disjoint digraphs. If
there is an edge from vertex $j$ to vertex $i$, we label it with
\nomenclature{$e_{ij}$}{an edge from vertex $j$ to vertex $i$}$e_{ij}>0$,
and with $e_{ij}=0$ if there is no such edge. %
\footnote{If a negative edge weight is encountered in applications, one can
reverse orientation of that edge, hence preserving positivity of edge
labels.%
}

The Laplacian matrix (hereafter, a \emph{Laplacian} $\mathcal{L}$)
\nomenclature{$\mathcal{L}(G)$}{Laplacian matrix of digraph $G$}of
given digraph $G$ is then defined as 

\begin{equation}
\left(\mathcal{L}(G)\right)_{ij}=\begin{cases}
e_{ij} & \text{if }i\ne j\\
-\sum_{m\ne j}e_{mj} & \text{if }i=j\,.
\end{cases}\label{eq:Laplacian Matrix}
\end{equation}
The corresponding \emph{Laplacian} \emph{dynamics} are then defined
as \nomenclature{$\frac{d\mathbf{x}}{dt}=\mathcal{L}(G)\mathbf{x}$}{Laplacian dynamics defined on digraph $G$}
\[
\frac{d\mathbf{x}}{dt}=\mathcal{L}(G)\cdot\mathbf{x}
\]
where $\mathbf{x}=\left(x_{1},\cdots,x_{n}\right)^{T}$ is column
vector of species concentrations at each vertex, $1,\cdots,n$. In
a biochemical context one may think of vertices as different species
and edges as rate of transformation from one species to another. However,
we note that this framework is symbolic in nature in the sense that
the mathematical description of the computed steady states is done
without the specification of rate constants, i.e., edge weights $e_{ij}$.
In other words, the only information about an individual $e_{ij}$
relevant to our approach is whether or not it is zero.

Laplacian matrices were first introduced by Kirchhoff in 1847 in his
article about electrical networks \cite{Kirchhoff1847}. Ever since
then Laplacians have been studied and applied in various fields. For
an example of studying the applications of Laplacians to spectral
theory, we refer the interested reader to Bronski and Deville \cite{Bronski2014}
in which they study the class of \emph{Signed graph Laplacians} (a
symmetric matrix, which is special case of above defined Laplacian).

In this article we will extend the framework intitially developed
in \cite{MirzaevGunawardena2013bmb} to investigate behaviors of Laplacian
dynamics when zero-th order synthesis and first order degradation
are added to the system. Specifically, we will examine the following
dynamics,\nomenclature{$\frac{d\mathbf{x}}{dt}=\mathcal{L}(G)\mathbf{x}-D\mathbf{x}+\mathbf{s}$}{Synthesis and degradation dynamics}

\begin{equation}
\frac{d\mathbf{x}}{dt}=\mathcal{L}(G)\cdot\mathbf{x}-D\cdot\mathbf{x}+\mathbf{s}\label{eq:New dynamics}
\end{equation}
where the degradation matrix $D$ is a diagonal matrix with \nomenclature{$d_i$}{Label of degration edge at vertex $i$}$\left(D\right)_{ii}=d_{i}\geq0$
and the synthesis vector $\mathbf{s}$ is a column vector with \nomenclature{$s_i$}{Label of synthesis edge at vertex $i$}$\left(\mathbf{s}\right)_{i}=s_{i}\geq0$.
\nomenclature{$D$}{Degradation matrix, which is a diagonal matrix with degradation edges as diagonal entries}\nomenclature{$\mathbf{s}$}{Synthesis vector: a column vector with synthesis edges as entries}Hereafter,
we refer to this new dynamics as synthesis and degradation dynamics
(or simply as SD dynamics). In the biological networks literature
this type of dynamics are often referred as \emph{inconsistent }networks\cite{Marashi2014}. 

For these dynamics, several questions naturally arise. Under what
conditions does this system have non-negative, stable ES solution?
Moreover, how can we relate the ES solution to the underlying digraph
structure of $G$ as we did for Laplacian dynamics without synthesis
and degradation? Our goal is to answer these questions on a theoretical
level as well as apply the result to real world CRN examples.

The outline of this work is as follows. We will first briefly review
the main results of \cite{Gunawardena2012,MirzaevGunawardena2013bmb}
and present some additional notation (to be used in subsequent sections).
In Section \ref{sec:Theoretical-Development} we describe our main
theoretical results and in Section \ref{sec: negativity of inverse}
fully discuss the proof of an important result in Section \ref{sec:Theoretical-Development}.

In Section \ref{sec:Biochemical-Network-Application}, we illustrate
an application of these results to exocytosis cascade of insulin granules
and glucose metabolism in pancreatic $\beta$-cells. Lastly, in Section
\ref{sec:Conclusions}, we conclude with a discussion of the implications
of these results as well as plans for future work.%
\footnote{ For the convenience of the reader and to promote clarity, we include
at the end of this document a list of nomenclature used throughout
this work.%
}

\section{\label{sec:Preliminary-results}Preliminaries}

In this section we briefly summarize the important results of Mirzaev
and Gunawardena \cite{MirzaevGunawardena2013bmb} and refer the interested
reader to that article for proofs and more extensive discussion and
interpretation. For the sake of clarity, we will preserve the original
notation while we include some additional definitions that can be
found in many introductory graph theory books.

Given a digraph $G$, we denote the set of vertices of $G$ with $\mathcal{V}(G)$
and we write \nomenclature{$i\Longrightarrow j$}{There exists a path from vertex $i$ to vertex $j$}$i\Longrightarrow j$
to denote the existence of a path from vertex $i$ to\emph{ }vertex
$j$. If $i\Longrightarrow j$ and $j\Longrightarrow i$, vertex $i$
is said to be \emph{strongly connected} to vertex $j$, and is denoted
\nomenclature{$i\Longleftrightarrow j$}{There exists a path from vertex $i$ to vertex $j$, and a path from vertex $j$ to vertex $i$}$i\Longleftrightarrow j$.
A digraph $G$ is \emph{strongly connected} if for each ordered pair
$i,j$ of vertices in $G$, we have that $i\Longleftrightarrow j$.
The \emph{strongly connected components} \nomenclature{SCC}{Strongly connected component}(SCCs)
of a digraph are the largest strongly connected subgraphs. Let \nomenclature{$C[i]$}{ SCC containing $i$}$C[i]$
denote the SCC containing $i$, $i\in\mathcal{V}(C[i])$. Suppose
we are given two SCCs, $C[i]$ and $C[j]$, if $i\Longrightarrow j$
we write \nomenclature{$C[i]\preceq C[j]$}{ SCC containing vertex $j$ can be reached from  SCC containing $i$}$C[i]\preceq C[j]$
to denote that $C[i]$ \emph{precedes} $C[j]$. This \emph{precedes
}relation is both reflexive and transitive. Moreover, the relation
is also antisymmetric as $C[i]\preceq C[j]$ and $C[j]\preceq C[i]$
imply that $i\Longleftrightarrow j$ and $C[i]=C[j]$. From this,
we can conclude that the precedes relation allows for a \emph{partial
ordering} of the SCCs. Accordingly, this allows us to identify so-called
\nomenclature{tSCC}{Terminal strongly connected component}terminal
SCCs (tSCC), which are those SCCs $C[i]$ such that, if $C[i]\preceq C[j]$
then $C[i]=C[j]$. These tSCCs are used in many other contexts, for
example, they are also known as ``attractors'' of state transition
graphs \cite{Berenguier2013}.

With this terminology, we can devise an insightful relabeling of the
vertices of digraph $G$. Such a relabeling will transform the Laplacian
matrix into one with a block lower-diagonal structure, which will
prove convenient in our theoretical development. Suppose there are
$q$ tSCCs out of a total of $p+q$ SCCs. Our goal is to relabel the
vertices such that the first $p$ blocks of Laplacian matrix correspond
to the $p$ non-terminal SCCs. Since the precedence relation, $\preceq$,
is a partial ordering, there exists an ordering of the SCCs, $C_{1},\dots,C_{p+q}$,
such that, \nomenclature{$C_{i}\preceq C_{j}$}{SCC $C_{j}$ can be reached from SCC $C_{i}$.}if
$C_{i}\preceq C_{j}$, then $i\le j$. Since a tSCC cannot precede
any other SCC, then the tSCCs can be in some arbitrary order $\{C_{i}\}_{i=p+1}^{p+q}$
(which will not impact our results). We denote \nomenclature{$a_i$}{Number of vertices in SCC $C_i$}$a_{i}$
as the number of vertices in $C_{i}$, and \nomenclature{$m_i$}{$i^{th}$ partial sum of $c_i$'s,  $m_{i}=\sum_{k=1}^{i}c_{k}$}$m_{i}=\sum_{k=1}^{i}a_{k}$
as the partial sum of the $a_{i}$'s, (with $m_{0}=0$). Note that
the $a_{i}$'s should add up to the number of vertices in digraph
$G$, i.e. $\sum_{i=1}^{p+q}a_{i}=n$. Then the vertices of $C_{i}$
are relabeled using only indices $m_{i-1}+1,\dots,m_{i-1}+a_{i}$
for $i=1,\dots,p+q$. Consequently, the new Laplacian matrix, $\mathcal{L}(G),$
is constructed using the relabeled vertices. Since $i<j$ implies
$C_{j}\not\preceq C_{i}$, the Laplacian of $G$ can be written in
block lower-triangular form

\[
\mathcal{L}(G)=\left(\begin{array}{ccc|ccc}
\boxed{\mathcal{L}_{1}} & \mathbf{0} & \mathbf{0} & \mathbf{0} & \cdots & \mathbf{0}\\
\vdots & \ddots & \vdots & \vdots & \vdots & \vdots\\
+ & + & \boxed{\mathcal{L}_{p}} & \mathbf{0} & \cdots & \mathbf{0}\\
\hline + & \cdots & + & \boxed{\mathcal{L}_{p+1}} & \mathbf{0} & \mathbf{0}\\
\vdots & \vdots & \vdots & \vdots & \ddots & \vdots\\
+ & \cdots & + & \mathbf{0} & \mathbf{0} & \boxed{\mathcal{L}_{p+q}}
\end{array}\right)=\left(\begin{array}{c|c}
N & \mathbf{0}\\
\hline B & T
\end{array}\right)\,,
\]
 where $+$ stands for some matrix with non-negative real entries,
the submatrix $N$ is block lower-triangular with non-negative off-diagonal
elements, $B$ is a matrix with non-negative elements, $\mathbf{0}$
is matrix of all zeros, and $T$ is also a block diagonal matrix such
that\nomenclature{$N$}{Lower-block diagonal submatrix of $\mathcal{L}(G)$ corresponding to non-terminal SCCs}\nomenclature{$T$}{Block diagonal submatrix of $\mathcal{L}(G)$ corresponding to tSCCs}
\begin{equation}
N=\left(\begin{array}{ccc}
\boxed{\mathcal{L}_{1}} &  & \mathbf{0}\\
\vdots & \ddots\\
+ & + & \boxed{\mathcal{L}_{p}}
\end{array}\right),\, T=\left(\begin{array}{ccc}
\boxed{\mathcal{L}_{p+1}} &  & \mathbf{0}\\
 & \ddots\\
\mathbf{0} &  & \boxed{\mathcal{L}_{p+q}}
\end{array}\right)\,.\label{eq: Definition of N}
\end{equation}
By the definition of the Laplacian matrix (see (\ref{eq:Laplacian Matrix}))
all off-diagonal elements are non-negative real numbers. The blocks
in boxes on the main diagonal, denoted by $\mathcal{L}_{1},\dots,\mathcal{L}_{p+q}$,
are the submatrices defined by restricting $\mathcal{L}(G)$ to the
vertices of the corresponding SCCs, $C_{1},\dots,C_{p+q}$. Note that
for $i=p+1,\dots,p+q$ each $\mathcal{L}_{i}$ is Laplacian matrix
in its own, $\mathcal{L}_{i}=\mathcal{L}(C_{i})$ . However for the
non-terminal SCCs, $\left\{ C_{i}\right\} _{i=1}^{p}$, there is always
at least one outgoing edge to some other SCC. This implies that for
$i=1,\dots,p$ each matrix $\mathcal{L}_{i}$ is defined as the Laplacian
of a corresponding SCC minus some non-zero diagonal matrix corresponding
to outgoing edges from this SCC, \nomenclature{$\mathcal{L}_{i}$}{Perturbed matrix corresponding to SCC $C_i$, $\mathcal{L}_{i}=\mathcal{L}(C_{i})-\Delta_{i}$, where  $\Delta_i$ diagonal matrix corresponding to outgoing edges of SCC $C_i$, if $C_i$ is tSCC then $\Delta_i\equiv 0$}$\mathcal{L}_{i}=\mathcal{L}(C_{i})-\Delta_{i}$
for some\nomenclature{$\Delta$}{A diagonal matrix with non-negative entries}
$\Delta_{i}\not\equiv0$. In this case we call $\mathcal{L}_{i}$
a \emph{perturbed Laplacian matrix}, or simply a \emph{perturbed matrix}
and note the following property of $\mathcal{L}_{i}$ (proven in \cite{MirzaevGunawardena2013bmb}).

\begin{rem}\label{lem:perturbed} The Perturbed Laplacian matrix
of strongly connected graph $G$ is non-singular.\end{rem}

A \emph{directed spanning subgraph} of digraph $G$ is a connected
subgraph of $G$ that includes every vertex of $G$, so that any spanning
subgraph which is at the same time is a tree is called \emph{directed
spanning tree }(\nomenclature{DST}{Directed spanning tree}DST) of
the digraph $G$. We say that a DST, \nomenclature{$\mathscr{T}$}{Directed spanning tree}$\mathscr{T}$,
is \emph{rooted} at $i\in G$ if vertex $i$ is the only vertex in
$\mathscr{T}$ without any outgoing edges, and denote the set of DSTs
of digraph $G$ rooted at vertex $i$ with \nomenclature{$\Theta_i(G)$}{Set of DSTs of digraph $G$  rooted at vertex $i$}$\Theta_{i}(G)$.
Thus $\Theta_{i}(G)$ is a non-empty set of spanning trees for a strongly
connected digraph $G$. However, for an arbitrary digraph there maybe
no spanning tree rooted at specific vertex, in which case $\Theta_{i}(G)=\emptyset$.
In this case the corresponding element, $\mathcal{L}(G)_{(j)}$, is
zero, where \nomenclature{$A_{(ij)}$}{$ij$ -th minor of Laplacian matrix $A$  and is the determinant of $(n-1)\times (n-1)$  matrix that results from deleting row $i$  and column $j$}$\mathcal{L}(G)_{(ji)}$
denotes the $ji$-th minor of Laplacian matrix $\mathcal{L}(G)$ and
is the determinant of the $(n-1)\times(n-1)$ matrix that results
from deleting row $j$ and column $i$ of $\mathcal{L}(G)$.

Next we review the main theorem from \cite{MirzaevGunawardena2013bmb}
on which the results of this paper are based. The theorem utilizes
the digraph structure of digraph $G$ to calculate minors of a Laplacian.
The proof of this theorem can be found in several papers, and we direct
readers to \cite{MirzaevGunawardena2013bmb} for a proof with same
notations as in this article.%
\footnote{For more generalized versions of MTT such as all-minors Matrix-Tree
theorem and Matrix Forest Theorem we refer reader to \cite{Agaev2000,Chebotarev2002}. %
}
\begin{theorem}
\label{thm:(Matrix-Tree-Theorem)}\nomenclature{MTT}{Matrix-Tree Theorem}(Matrix-Tree
Theorem) If $G$ is digraph with $n$ vertices then the minors of
its Laplacian are given by 
\[
\mathcal{L}(G)_{(ij)}=(-1)^{n+i+j-1}\sum_{\mathscr{T}\in\Theta_{j}(G)}P_{\mathcal{\mathscr{T}}}\,,
\]
where $P_{\mathcal{\mathscr{T}}}$ is the product of all edge weights
in the spanning tree $\mathscr{T}$. 
\end{theorem}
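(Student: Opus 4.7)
The plan is to establish the formula by expanding the minor via Leibniz's formula and then identifying, through a sign-reversing involution, exactly which combinatorial configurations survive. Writing $\mathcal{L}(G)_{(ij)}$ as a signed sum over bijections $\sigma:\{1,\dots,n\}\setminus\{i\}\to\{1,\dots,n\}\setminus\{j\}$, each nonzero summand selects one entry per remaining row, i.e.\ either an off-diagonal $e_{\ell k}$ (encoding an outgoing arc from $k$ to $\ell$) or a diagonal entry $-\sum_{m\ne k}e_{mk}$. Distributing the sums in the diagonal entries produces a signed sum over decorated functional subgraphs on the vertex set, in which every vertex $k\ne i$ carries a marked outgoing arc.

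The core combinatorial step is to construct an involution on these decorated configurations that toggles a distinguished arc lying on a directed cycle between its ``diagonal'' and ``off-diagonal'' roles. This involution is sign-reversing and fixed-point-free on any configuration containing a cycle, so all contributions from non-tree configurations cancel. The remaining fixed configurations are precisely the directed acyclic subgraphs in which every vertex $k\ne j$ has out-degree one and vertex $j$ has out-degree zero, i.e.\ exactly the elements of $\Theta_{j}(G)$, and each appears with edge-weight product $P_{\mathscr{T}}$.

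The remaining task is sign bookkeeping: for each $\mathscr{T}\in\Theta_{j}(G)$ one checks that the global prefactor is $(-1)^{n+i+j-1}$. This splits into the cofactor factor $(-1)^{i+j}$ relating a minor to its cofactor, a factor $(-1)^{n-1}$ coming from the $n-1$ diagonal minus signs absorbed through the involution, and the sign of the permutation associated to $\mathscr{T}$, which one shows depends only on $i$ and $j$ and not on the particular tree. The degenerate case $\Theta_{j}(G)=\emptyset$ is handled separately by noting that the submatrix $\mathcal{L}(G)[\hat{i};\hat{j}]$ is then singular when restricted to the vertices from which $j$ is unreachable, so the minor vanishes, matching the empty sum on the right. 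I expect the principal obstacle to be verifying that the involution is genuinely well-defined and fixed-point-free on exactly the cyclic configurations; as a safer alternative one can instead proceed by induction on the number of edges via a deletion-contraction identity, which trades the combinatorial involution for the multilinearity of the determinant and leads to the same conclusion.
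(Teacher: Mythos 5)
The paper itself does not prove this theorem; it explicitly defers to the cited reference of Mirzaev and Gunawardena for a proof in the same notation, so there is no in-paper argument to compare against. Your Leibniz-expansion-plus-involution strategy is a recognized, standard route to the directed Matrix-Tree Theorem and the overall architecture (expand, cancel cyclic configurations by a sign-reversing involution, identify the fixed points with $\Theta_{j}(G)$) is sound. The identification of the fixed configurations as the out-degree-one acyclic subgraphs rooted at $j$ matches the paper's definition of a DST rooted at $j$.

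The genuine gap is in your sign bookkeeping for the off-diagonal case $i\ne j$, which is where all the delicacy of this version of the theorem lives. As written, your decomposition of the prefactor into $(-1)^{i+j}$ times ``$(-1)^{n-1}$ coming from the $n-1$ diagonal minus signs'' times a tree-independent permutation sign is incorrect term by term: in a surviving configuration the bijection $\sigma:\{1,\dots,n\}\setminus\{i\}\to\{1,\dots,n\}\setminus\{j\}$ cannot fix $i$ (since $i$ is excluded from the codomain), so the off-diagonal entries used are forced to be exactly the edges on the directed path from $i$ to $j$ in $\mathscr{T}$. If that path has length $\ell$, only $n-1-\ell$ vertices contribute a diagonal minus sign, giving $(-1)^{n-1-\ell}$, while the extended permutation has a single nontrivial cycle of length $\ell+1$ contributing $(-1)^{\ell}$ --- so neither factor is tree-independent on its own; only their product $(-1)^{n-1}$ is. Relatedly, your claim that ``the sign of the permutation associated to $\mathscr{T}$ depends only on $i$ and $j$ and not on the particular tree'' is false as stated, since $\ell$ varies with $\mathscr{T}$. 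The conclusion survives, but you need to carry out this path-length cancellation explicitly; a cleaner alternative that avoids it entirely is to prove the principal-minor case $\mathcal{L}(G)_{(jj)}=(-1)^{n-1}\sum_{\mathscr{T}\in\Theta_{j}(G)}P_{\mathscr{T}}$ by your involution (where $\sigma$ is an honest permutation and the signs are straightforward), and then deduce the $i\ne j$ case from the fact that $\mathbbm{1}^{T}\mathcal{L}(G)=\mathbf{0}$ forces every row of $\mathrm{adj}(\mathcal{L}(G))$ to be constant, so $(-1)^{i+j}\mathcal{L}(G)_{(ij)}$ is independent of $i$. Your remark on the degenerate case $\Theta_{j}(G)=\emptyset$ is unnecessary --- the involution already yields the empty sum --- but it is not wrong.
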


An illustration of above theorem is depicted in Figure \ref{fig:Illustration-of-Algorithm},
where $\mathcal{L}(G)_{(23)}$ and $\mathcal{L}(G)_{32}$ minor of
$\mathcal{L}(G)$ are computed using spanning trees of digraph $G$.
Consequently, the MTT implies that the $ij$-th minor of the Laplacian
(up to sign) is the sum of all $P_{\mathcal{\mathscr{T}}}$ for each
spanning tree, $\mathscr{T}$, rooted at vertex $j$. Since all edges
of the digraph $G$ are non-negative numbers (zero only if there is
no such edge), then the expression $\rho_{i}^{G}=\sum_{\mathscr{T}\in\Theta_{i}(G)}P_{\mathscr{T}}$
will always be non-negative.%
\footnote{Later, we will define $\rho_{i}^{G}$ as an entry of the kernel element
of Laplacian.%
} If $G$ is strongly connected then $\Theta_{i}(G)\ne\emptyset$,
so $\rho_{i}^{G}$ is strictly positive. 

\begin{figure}
\centering \subfloat[A digraph $G$ with its set of spanning trees rooted at each of its
vertices. The root node is bolded in each spanning tree. ]{\begin{tikzpicture}[scale=0.8, auto=left, line width=1pt]  
\node[circle,draw=black!90,fill=blue!20] (1) at (0,0.7) {1};   
\node[circle,draw=black!90,fill=blue!20] (2) at (3,0.7)  {2};   
\node[circle,draw=black!90,fill=blue!20] (3) at (1.5,2.7)  {3};  
\node (4) at (2,-2.3) {$G$};

\begin{scope}[thick, ->,>=stealth', every node/.style = {above}, line width=1.5pt]  

\draw  (1) edge  node [below]{$a$} (2);
\draw  (2) edge  [bend right] node [right]{$b$} (3);
\draw  (3) edge  node [left][left]{$c$} (1);
\draw  (3) edge [bend right] node [left]{$d$} (2);

\end{scope}

\end{tikzpicture}
\hspace{2cm}
\begin{tikzpicture}[scale=0.8, auto=left, line width=1pt]  
%Spanning tree of 1%
\node[circle,draw=black!90,fill=blue!20,line width=2.2pt] (one1) at (0,0) {1};   
\node[circle,draw=black!90,fill=blue!20] (two1) at (3,0)  {2};  
\node[circle,draw=black!90,fill=blue!20] (three1) at (1.5,2)  {3};
\node (41) at (1.5,-3)  {$\Theta_{1}(G)$};

%Spanning tree of 2%

\node[circle,draw=black!90,fill=blue!20] (one2) at (5,2) {1};   
\node[circle,draw=black!90,fill=blue!20,line width=2.2pt] (two2) at (8,2)  {2};   
\node[circle,draw=black!90,fill=blue!20] (three2) at (6.5,4)  {3}; 

\node[circle,draw=black!90,fill=blue!20] (one3) at (5,-2) {1};   
\node[circle,draw=black!90,fill=blue!20,line width=2.2pt] (two3) at (8,-2)  {2};   
\node[circle,draw=black!90,fill=blue!20](three3) at (6.5,0)  {3};

\node (42) at (6.5,-3)  {$\Theta_{2}(G)$};

%Spanning tree of 3%
\node[circle,draw=black!90,fill=blue!20] (one4) at (10,0) {1};   
\node[circle,draw=black!90,fill=blue!20] (two4) at (13,0)  {2};   
\node[circle,draw=black!90,fill=blue!20, line width=2.2pt] (three4) at (11.5,2)  {3}; 

\node (43) at (11.5,-3)  {$\Theta_{3}(G)$};

\begin{scope}[thick, ->,>=stealth', every node/.style = {above}, line width=1.5pt]  
%Spanning tree of 1%
\draw  (two1) edge  node [left]{$b$} (three1);
\draw  (three1) edge  node [left][left]{$c$} (one1);

%Spanning tree of 2%

\draw  (one2) edge  node [above]{$a$} (two2);
\draw  (three2) edge  node [left]{$d$} (two2);

\draw  (three3) edge  node [left]{$c$} (one3);
\draw  (one3) edge  node [above]{$a$} (two3);

%Spanning tree of *%
\draw  (two4) edge  node [above]{$b$} (three4);
\draw  (one4) edge  node [above]{$a$} (two4);

\end{scope}

\end{tikzpicture}}

\subfloat[Associated Laplacian matrix of $G$. Two minors of the Laplacian matrix,
$\mathcal{L}(G)_{(23)}$ and $\mathcal{L}(G)_{(32)}$, are calculated
using the Matrix-Tree Theorem.]{%
\begin{minipage}[c][2.5\totalheight]{0.9\columnwidth}%
{\tikzstyle{every picture}+=[remember picture]  
$\mathcal{L}(G)=\left(\begin{array}{ccc} -a & 0 & c\\ a  & -b & \tikz[baseline]{\node[circle,draw=black!90,thick, anchor=base] (t1){$d$};}\\ 0 & \tikz[baseline]{\node[circle,draw=black!90,thick, anchor=base] (t2){$b$};} & -c-d \end{array}\right)$

\begin{tikzpicture}[overlay,>=latex] 
\node  (n1) at (10,2) {$\mathcal{L}\left(G\right)_{(23)}=\left|\begin{array}{cc} -a& 0\\  0 & b \end{array}\right|=-ab$}; 
\node  (n2) at (10,-1){$\mathcal{L}\left(G\right)_{(32)}=\left|\begin{array}{cc} -a& c\\  a & d \end{array}\right|=-ad-ac$}; 
\path[thick,->] (t1) edge  (n1.west); 
\path[thick,->] (t2) edge  (n2.west); 
\end{tikzpicture} }%
\end{minipage}

}

\caption{Illustration of Matrix-Tree Theorem\label{fig:Illustration-of-Algorithm}}
\end{figure}

Uno in his article \cite{Uno1996} provided an algorithm for enumerating
and listing all spanning trees of a general digraph and Ahsendorf
et al.~\cite{Ahsendorf2013} utilized Uno's algorithm to compute
minors of Laplacian matrix using the Matrix-Tree theorem. Having an
implementation of the MTT available, one can then calculate the kernel
elements of the Laplacian, $\mathcal{L}(G)$, using the following
two fairly well known Propositions (see \cite{MirzaevGunawardena2013bmb}
for proofs). 
\begin{proposition}
\label{prop:SCC kernel}If $G$ is strongly connected graph, then
$\ker\mathcal{L}(G)=span\left\{ \boldsymbol{\rho}^{G}\right\} $,
where \nomenclature{$\boldsymbol\rho^{G}$}{Kernel element of strongly connected digraph $G$ calculated by MTT}$\boldsymbol{\rho}^{G}$
is column vector with $\left(\boldsymbol{\rho}^{G}\right)_{i}=\rho_{i}^{G}>0$. 
\end{proposition}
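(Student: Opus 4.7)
The plan is to establish the proposition in three parts: (i) exhibit $\boldsymbol\rho^G$ as an element of $\ker\mathcal{L}(G)$, (ii) show that the kernel is exactly one-dimensional, and (iii) conclude strict positivity of every coordinate $\rho_i^G$. The key tool is the Matrix-Tree Theorem (Theorem \ref{thm:(Matrix-Tree-Theorem)}), applied through the classical adjugate identity $\mathcal{L}(G)\cdot\mathrm{adj}(\mathcal{L}(G))=\det(\mathcal{L}(G))\,I$.

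First, I would note that every column of $\mathcal{L}(G)$ sums to zero by the definition \eqref{eq:Laplacian Matrix}, hence $\mathbf{1}^{T}\mathcal{L}(G)=\mathbf{0}$ and $\det\mathcal{L}(G)=0$. Therefore $\mathcal{L}(G)\cdot\mathrm{adj}(\mathcal{L}(G))=\mathbf{0}$, so every column of $\mathrm{adj}(\mathcal{L}(G))$ lies in $\ker\mathcal{L}(G)$. Using the standard convention $(\mathrm{adj}(\mathcal{L}(G)))_{ij}=(-1)^{i+j}\mathcal{L}(G)_{(ji)}$ together with the MTT identity $\mathcal{L}(G)_{(ji)}=(-1)^{n+i+j-1}\rho_{i}^{G}$, a direct substitution gives
\[
(\mathrm{adj}(\mathcal{L}(G)))_{ij}=(-1)^{n-1}\rho_{i}^{G},
\]
i.e., every column of $\mathrm{adj}(\mathcal{L}(G))$ equals $(-1)^{n-1}\boldsymbol\rho^{G}$. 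In particular, $\mathcal{L}(G)\,\boldsymbol\rho^{G}=\mathbf{0}$, establishing $\boldsymbol\rho^{G}\in\ker\mathcal{L}(G)$.

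For the dimension count, I would invoke strong connectedness to guarantee that $\Theta_{i}(G)\ne\emptyset$ for each vertex $i$, so that $\rho_{i}^{G}=\sum_{\mathscr{T}\in\Theta_{i}(G)}P_{\mathscr{T}}>0$ (each product $P_{\mathscr{T}}$ is a product of strictly positive edge weights). Consequently, by the MTT formula the principal minor $\mathcal{L}(G)_{(ii)}$ is nonzero, which forces $\mathrm{rank}(\mathcal{L}(G))\ge n-1$. Combined with singularity this gives $\mathrm{rank}(\mathcal{L}(G))=n-1$, so $\ker\mathcal{L}(G)$ is one-dimensional and therefore equals $\mathrm{span}\{\boldsymbol\rho^{G}\}$. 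Strict positivity of each entry $\rho_{i}^{G}$ then follows from the same non-emptiness of $\Theta_{i}(G)$.

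The only subtle point is bookkeeping of signs when moving between the MTT statement (with its $(-1)^{n+i+j-1}$ factor) and the adjugate convention (with its $(-1)^{i+j}$ factor); I expect this to be the main potential pitfall, but the cancellation is clean and yields a uniform sign $(-1)^{n-1}$ across all columns, which is exactly what is needed for the conclusion. Everything else is either a direct invocation of the Matrix-Tree Theorem, the standard adjugate identity, or the hypothesis of strong connectedness.
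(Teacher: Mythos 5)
Your proof is correct, and it follows the standard route: the paper itself defers the proof of this proposition to \cite{MirzaevGunawardena2013bmb}, where the argument is exactly this combination of the adjugate identity $\mathcal{L}(G)\cdot\mathrm{adj}(\mathcal{L}(G))=\det(\mathcal{L}(G))\,\mathbb{I}_{n}=\mathbf{0}$ with the Matrix-Tree Theorem to identify the columns of the adjugate with $(-1)^{n-1}\boldsymbol{\rho}^{G}$, plus the nonvanishing principal minor $\mathcal{L}(G)_{(ii)}$ to pin the rank at $n-1$. Your sign bookkeeping and the use of strong connectedness (to get $\Theta_{i}(G)\ne\emptyset$, hence $\rho_{i}^{G}>0$) are both handled correctly.
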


Here, the kernel is defined in the conventional sense, $\ker\mathcal{L}(G)=\left\{ x\in\mathbb{R}^{n\times1}:\,\mathcal{L}(G)\cdot x=\mathbf{0}\right\} $.
Moreover, Proposition \ref{prop:SCC kernel} guarantees that a kernel
element has all positive elements, a fact which is not immediately
obvious using standard linear algebraic methods. When $G$ is not
a strongly connected digraph, the kernel elements of $\mathcal{L}(G)$
are constructed using kernel elements of its tSCCs. Specifically,
since for $i=1,\cdots,q$ each $\mathcal{L}_{p+i}=\mathcal{L}(C_{p+i})$
is a Laplacian matrix on its own, by Proposition \ref{prop:SCC kernel}
there exists \nomenclature{$\mathbb{R}^{m\times n}_{>0}$}{Set of all $m\times n$ matrices with striclty positve entiries}$\boldsymbol{\rho}^{C_{p+i}}\in\mathbb{R}_{>0}^{a_{p+i}\times1}$
such that $\mathcal{L}_{p+i}\cdot\boldsymbol{\rho}^{C_{p+i}}=\mathbf{0}$
and $a_{i}$ is the number of vertices in $C_{i}$. Then we can extend
this vector to \nomenclature{$\bar{\boldsymbol{\rho}}^{C_{i}}$}{A column vector, which is the extension of ${\rho}^{C_{i}}$, see Preliminaries section }$\bar{\boldsymbol{\rho}}^{C_{p+i}}\in\mathbb{R}_{>0}^{n\times1}$
by setting all entries with indices outside $C_{p+i}$ to zero:

\begin{equation}
\left(\bar{\boldsymbol{\rho}}^{C_{p+i}}\right)_{k}=\begin{cases}
\left(\boldsymbol{\rho}^{C_{p+i}}\right)_{k-m_{p+i-1}} & \text{if }m_{p+i-1}\le k\le m_{p+i}\\
0 & \text{otherwise}
\end{cases}\label{eq:Extension of rho}
\end{equation}

Since $\mathcal{L}(G)$ has lower-block diagonal structure and since
$\mathcal{L}_{p+i}\cdot\boldsymbol{\rho}^{C_{p+i}}=\mathbf{0}$ for
each $i=1,\cdots,q$ we have $\mathcal{L}(G)\cdot\bar{\boldsymbol{\rho}}^{C_{p+i}}=\mathbf{0}$.
This can be summarized in the following Proposition:
\begin{proposition}
\label{prop:General kernel}For any graph $G$,
\[
\ker\mathcal{L}(G)=span\left\{ \bar{\boldsymbol{\rho}}^{C_{p+1}},\dots,\bar{\boldsymbol{\rho}}^{C_{p+q}}\right\} \,,
\]

and $\dim\ker\mathcal{L}(G)=q$
\end{proposition}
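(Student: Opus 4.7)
The one direction is already established in the excerpt: since $\mathcal{L}(G)$ has a block lower-triangular structure with respect to the SCC relabeling and since $\mathcal{L}_{p+i}\cdot \boldsymbol{\rho}^{C_{p+i}} = \mathbf{0}$, each extended vector $\bar{\boldsymbol{\rho}}^{C_{p+i}}$ lies in $\ker \mathcal{L}(G)$. So the plan is to pin down the dimension from the block structure and then check that these $q$ vectors are linearly independent, hence form a basis.

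First I would exploit the decomposition $\mathcal{L}(G) = \left(\begin{smallmatrix} N & \mathbf{0} \\ B & T \end{smallmatrix}\right)$ from \eqref{eq: Definition of N}. The submatrix $N$ is itself block lower-triangular with the perturbed Laplacians $\mathcal{L}_{1},\dots,\mathcal{L}_{p}$ on the diagonal. By Remark \ref{lem:perturbed} each $\mathcal{L}_{i}$ is non-singular, and the determinant of a block lower-triangular matrix is the product of the determinants of the diagonal blocks, so $\det N = \prod_{i=1}^{p} \det \mathcal{L}_{i} \neq 0$ and therefore $N$ is non-singular. Writing any $v \in \ker \mathcal{L}(G)$ as $v = (v_{N}, v_{T})^{T}$ partitioned compatibly, the top block yields $N v_{N} = \mathbf{0}$, forcing $v_{N} = \mathbf{0}$; the bottom block then reduces to $T v_{T} = \mathbf{0}$.

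Next I would compute $\dim \ker T$ directly from the block-diagonal form $T = \operatorname{diag}(\mathcal{L}_{p+1}, \dots, \mathcal{L}_{p+q})$. Each $\mathcal{L}_{p+i} = \mathcal{L}(C_{p+i})$ is the Laplacian of the strongly connected digraph $C_{p+i}$, so by Proposition \ref{prop:SCC kernel} its kernel is one-dimensional, spanned by $\boldsymbol{\rho}^{C_{p+i}}$. Since $T$ is block diagonal, $\ker T$ is the direct sum of these one-dimensional kernels, giving $\dim \ker T = q$. Combining with the previous step, every element of $\ker \mathcal{L}(G)$ has $v_{N} = \mathbf{0}$ and $v_{T}$ uniquely determined by its components in each $\ker \mathcal{L}_{p+i}$, so $\dim \ker \mathcal{L}(G) = q$.

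Finally, the $q$ vectors $\bar{\boldsymbol{\rho}}^{C_{p+1}}, \dots, \bar{\boldsymbol{\rho}}^{C_{p+q}}$ have pairwise disjoint supports — by construction \eqref{eq:Extension of rho}, $\bar{\boldsymbol{\rho}}^{C_{p+i}}$ is non-zero only on the index range $\{m_{p+i-1}+1,\dots,m_{p+i}\}$, where in fact it is strictly positive. Disjoint-support vectors with at least one non-zero coordinate each are linearly independent, so these $q$ vectors are independent members of the $q$-dimensional space $\ker \mathcal{L}(G)$ and therefore span it. There is no real obstacle in the argument; the only point requiring care is to justify non-singularity of $N$ by invoking Remark \ref{lem:perturbed} blockwise, which is what makes the top half of any kernel vector vanish and reduces the problem to the already-understood tSCC case.
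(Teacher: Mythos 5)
Your proof is correct and follows essentially the same route as the paper: the forward inclusion is exactly the paper's in-text argument from the block lower-triangular structure, and the reverse inclusion and dimension count (which the paper defers to the cited reference \cite{MirzaevGunawardena2013bmb}) are supplied correctly by invoking Remark \ref{lem:perturbed} blockwise to get $\det N\neq 0$, reducing $\ker\mathcal{L}(G)$ to $\ker T$, and applying Proposition \ref{prop:SCC kernel} to each diagonal block of $T$. The disjoint-support observation for linear independence is the right finishing touch, and the whole argument is consistent with the paper's own use of these ingredients in Corollary \ref{cor:alg and geo} and around \eqref{eq:partition}.
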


To prove stability of the steady states we will use the following
theorem and corollary, which provides sufficiency conditions for the
solution to a dynamical system $d\mathbf{x}/dt=A\cdot\mathbf{x}$
coupled with initial condition $\mathbf{x}(0)=\mathbf{x}_{0}$ to
converge to a steady state (proof in \cite{MirzaevGunawardena2013bmb}).
Typically, the stability of a dynamics depends on the sign of the
real parts of the eigenvalues of $A$ as well as the algebraic and
geometric multiplicities of the zero eigenvalue. 
\begin{theorem}
\label{thm:Convergence}Suppose that the real matrix $A$ satisfies
following two conditions

1. If $\lambda$ is an eigenvalue of $A$, then either $\lambda=0$
or $Re(\lambda)<0$

2. $alg_{A}(0)=geo_{A}(0)$, where \nomenclature{$alg_A(0)$}{Algebraic multiplicity of zero eigenvalue of matrix $A$}$alg_{A}(0)$
and \nomenclature{$geo_A(0)$}{Geometric multiplicity of zero eigenvalue of matrix $A$}$geo_{A}(0)$
are the algebraic and geometric multiplicities of zero eigenvalue,
respectively. 

Then the solution of $d\mathbf{x}/dt=A\cdot\mathbf{x}$ converges
to a steady state as $t\to\infty$ for any initial condition. 
\end{theorem}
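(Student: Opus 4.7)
The plan is to reduce the problem to the standard Jordan form analysis of $e^{At}$ and read off the limit directly. First I would write $A = PJP^{-1}$ in Jordan canonical form (over $\mathbb{C}$), so that the solution takes the explicit form $\mathbf{x}(t) = e^{At}\mathbf{x}_0 = P\,e^{Jt}\,P^{-1}\mathbf{x}_0$. The matrix $J$ is a direct sum of Jordan blocks $J_k(\lambda)$, one group for $\lambda=0$ and the remaining groups for eigenvalues with $Re(\lambda)<0$, by hypothesis (1).

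Next I would analyze $e^{Jt}$ block by block. For a Jordan block $J_k(\lambda)$ of size $k$, a standard computation gives $e^{J_k(\lambda)t} = e^{\lambda t}\sum_{j=0}^{k-1}\tfrac{t^j}{j!}N^j$, where $N$ is the nilpotent shift. The crucial observation is that hypothesis (2), $alg_A(0) = geo_A(0)$, forces every Jordan block associated with the zero eigenvalue to be $1\times 1$; otherwise the algebraic multiplicity would strictly exceed the geometric one. Hence on the zero-eigenvalue blocks, $e^{Jt}$ is simply the identity for all $t$, producing a constant contribution. On each block for $\lambda \neq 0$, since $Re(\lambda) < 0$, the entries of $e^{J_k(\lambda)t}$ are products of $e^{\lambda t}$ with polynomials in $t$, and these decay to $0$ as $t \to \infty$.

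Assembling the blocks, $e^{Jt}$ converges to the projection $\Pi$ onto $\ker J$ along the span of the generalized eigenspaces of the nonzero eigenvalues. Pulling back through $P$, the solution satisfies $\mathbf{x}(t) \to P\,\Pi\,P^{-1}\mathbf{x}_0$ as $t\to\infty$. Since $\Pi P^{-1}\mathbf{x}_0 \in \ker J$, the limit lies in $P(\ker J) = \ker A$, so $A$ times the limit vanishes, confirming that $\mathbf{x}^* := \lim_{t\to\infty}\mathbf{x}(t)$ is an equilibrium of the dynamics.

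The main obstacle is the bookkeeping around the zero eigenvalue: without hypothesis (2), nontrivial Jordan blocks at $\lambda=0$ would produce polynomial growth in $t$ and the solution would diverge, so the argument hinges on carefully justifying why $alg_A(0)=geo_A(0)$ rules this out. The decay for nonzero eigenvalues is routine once one bounds $|t^j e^{\lambda t}|$ by $C e^{Re(\lambda)t/2}$ for large $t$, so the real content is the Jordan-structure step.
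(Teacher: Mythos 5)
Your proof is correct and follows the standard route: the paper itself defers the proof of this theorem to the cited reference \cite{MirzaevGunawardena2013bmb}, where the argument is likewise based on the Jordan canonical form, with condition (2) forcing all Jordan blocks at $\lambda=0$ to be $1\times1$ and condition (1) killing the remaining blocks as $t\to\infty$. Your treatment of the limit as the projection onto $\ker A$ matches that argument, so there is nothing to add.
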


\begin{corollary}
\label{cor:alg and geo}The Laplacian of a weakly connected digraph
satisfies conditions of Theorem \ref{thm:Convergence}. Moreover,
$geo_{\mathcal{L}(G)}\left(0\right)=alg_{\mathcal{L}(G)}\left(0\right)=q$,
where $q$ is number of tSCCs of $G$. 
\end{corollary}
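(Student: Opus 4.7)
The plan is to exploit the block lower-triangular decomposition of $\mathcal{L}(G)$ established in Section~\ref{sec:Preliminary-results}: because the matrix is triangular at the block level, the characteristic polynomial factors as $\prod_{i=1}^{p+q}\det(\mathcal{L}_{i}-\lambda I)$, so the spectrum of $\mathcal{L}(G)$ (counted with algebraic multiplicity) is the union of the spectra of the diagonal blocks $\mathcal{L}_{1},\dots,\mathcal{L}_{p+q}$. I would therefore verify condition~1 of Theorem~\ref{thm:Convergence} block by block, treating terminal and non-terminal blocks separately.

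For each terminal block $\mathcal{L}_{p+k}=\mathcal{L}(C_{p+k})$ the underlying SCC is strongly connected, so for a sufficiently large shift $s>0$ the matrix $\mathcal{L}(C_{p+k})+sI$ is non-negative with strictly positive diagonal and irreducible, hence primitive. Its column sums are identically $s$, which by the Perron--Frobenius theorem is simultaneously its spectral radius and a simple Perron eigenvalue that strictly dominates every other eigenvalue in modulus. Translating back, $0$ is a simple eigenvalue of $\mathcal{L}(C_{p+k})$ (as already noted in Proposition~\ref{prop:SCC kernel}) and every other eigenvalue $\lambda$ satisfies $\mathrm{Re}(\lambda)<0$, since $\mathrm{Re}(\lambda+s)\le|\lambda+s|<s$.

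For each non-terminal block $\mathcal{L}_{i}$ with $i\le p$, Remark~\ref{lem:perturbed} removes $0$ from the spectrum, and strict negativity of $\mathrm{Re}(\lambda)$ is most cleanly obtained by showing that $-\mathcal{L}_{i}$ is a non-singular $M$-matrix. Writing $-\mathcal{L}_{i}=sI-B$ with $s=\max_{j}(-\mathcal{L}_{i})_{jj}$ makes $B$ non-negative and, because $C_{i}$ is strongly connected, irreducible. The non-trivial perturbation $\Delta_{i}\not\equiv\mathbf{0}$ forces the column sums of $B$ to be non-constant with at least one strictly less than $s$, so the irreducibility bound on the Perron radius gives $\rho(B)<s$. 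The standard $M$-matrix conclusion then yields $\mathrm{Re}(\mu)>0$ for every eigenvalue $\mu$ of $-\mathcal{L}_{i}$, equivalently $\mathrm{Re}(\lambda)<0$ for $\mathcal{L}_{i}$.

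Combining these per-block facts, $\mathcal{L}(G)$ has $0$ as an eigenvalue of algebraic multiplicity exactly $q$ (one simple contribution from each of the $q$ terminal blocks, with no contribution from the non-singular perturbed blocks), and all remaining eigenvalues lie strictly in the open left half-plane, which is condition~1 of Theorem~\ref{thm:Convergence}. For condition~2, Proposition~\ref{prop:General kernel} directly supplies $q$ linearly independent kernel vectors $\bar{\boldsymbol{\rho}}^{C_{p+1}},\dots,\bar{\boldsymbol{\rho}}^{C_{p+q}}$, so $geo_{\mathcal{L}(G)}(0)=q=alg_{\mathcal{L}(G)}(0)$. The main technical obstacle throughout is upgrading the Gershgorin-type bound $\mathrm{Re}(\lambda)\le 0$ to a strict inequality for non-zero eigenvalues; this is precisely where primitivity (for the terminal blocks) and the non-singular $M$-matrix structure (for the perturbed blocks) do the essential work.
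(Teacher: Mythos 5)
Your argument is correct, but it follows a genuinely different route from the one the paper takes (the proof itself is deferred to \cite{MirzaevGunawardena2013bmb}, but the paper is explicit that condition~1 of Theorem~\ref{thm:Convergence} ``follows from applying the Gershgorin theorem to the columns of the matrix $\mathcal{L}(G)$''). The paper's approach localizes every eigenvalue of $\mathcal{L}(G)$ at once: each column-Gershgorin disc is centered at $-\sum_{m\ne j}e_{mj}$ with radius $\sum_{m\ne j}e_{mj}$, so it meets the imaginary axis only at the origin, giving $\mathrm{Re}(\lambda)\le 0$ with equality forcing $\lambda=0$ --- no block decomposition and no Perron--Frobenius theory is needed for condition~1. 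The multiplicity count is then handled separately via the block-triangular form, Remark~\ref{lem:perturbed} for the non-terminal blocks, and the Matrix-Tree-Theorem machinery behind Proposition~\ref{prop:SCC kernel} (the nonvanishing of $\sum_i\rho_i^{C}$ is what makes $0$ a \emph{simple} root of each terminal block's characteristic polynomial), with Proposition~\ref{prop:General kernel} supplying $geo_{\mathcal{L}(G)}(0)=q$. You instead work block by block from the start, invoking Perron--Frobenius/primitivity on the shifted terminal blocks and the non-singular $M$-matrix structure of $-\mathcal{L}_i$ on the non-terminal ones. What your route buys is a unified treatment: the simple Perron root delivers both the strict spectral gap and $alg_{\mathcal{L}_{p+k}}(0)=1$ in one stroke, and it is self-contained in standard non-negative matrix theory rather than leaning on the MTT. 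What the paper's route buys is elementarity for condition~1 (Gershgorin only) and consistency with the graph-theoretic toolkit used everywhere else. One small wording slip in your non-terminal case: $\Delta_i\not\equiv\mathbf{0}$ does not force the column sums of $B$ to be \emph{non-constant} (they are constant whenever all $\delta_j$ are equal and positive); the correct dichotomy is that either the sums are constant and equal to some $c<s$, whence $\rho(B)=c<s$ directly, or they are non-constant with maximum at most $s$, whence irreducibility gives the strict bound $\rho(B)<s$. Either branch yields your conclusion, so this is cosmetic rather than a gap.
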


With these preliminary results in hand we provide stability analysis
for the SD dynamics as well as a graph theoretical algorithm for the
computation of steady states.

\section{\label{sec:Theoretical-Development}Theoretical Development}

In this section we will provide a thorough analysis of \nomenclature{SD dynamics}{Synthesis and degradation dynamics}
the synthesis and degradation dynamics (SD dynamics), (\ref{prop:General kernel}),
that we defined earlier. Suppose now that we add additional edges
to core digraph, $G$,
\[
\overset{s_{i}}{\longrightarrow}i\:,\, i\overset{d_{i}}{\longrightarrow}
\]
corresponding to zeroth-order synthesis and first-order degradation,
respectively. Each vertex can have any combination of synthesis and
degradation edges and the dynamics can now be described by the following
system of linear ordinary differential equations (ODEs): 
\begin{equation}
\frac{d\mathbf{x}}{dt}=\mathcal{L}(G)\cdot\mathbf{x}-D\cdot\mathbf{x}+\mathbf{s}\,.\label{eq:3}
\end{equation}
Here $\mathcal{L}(G)$ is the Laplacian matrix of the core digraph
$G$, $D$ is a diagonal matrix with $\left(D\right)_{ii}=d_{i}$,
and $\mathbf{s}$ is a column vector with $\left(\mathbf{s}\right)_{i}=s_{i}$,
using the convention that $d_{i}$ or $s_{i}$ is zero if the corresponding
partial edge at vertex $i$ is absent. 

The presence of synthesis without degradation yields unstable dynamics.
Therefore, whenever we have $D\equiv\mathbf{0}$ we assume that $\mathbf{s}\equiv\mathbf{0}$.
In this case the system reduces to Laplacian dynamics, for which a
thorough analysis was given in \cite{MirzaevGunawardena2013bmb}.
From now on we will assume that at least one element of $D$ is nonzero.
\begin{proposition}
The dynamics defined by Equation (\ref{eq:3}) have a unique solution
for a given initial condition. \end{proposition}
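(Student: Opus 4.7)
The plan is to recognize that equation~(\ref{eq:3}) is a first-order linear ODE system with constant coefficients: setting $A = \mathcal{L}(G) - D$, equation~(\ref{eq:3}) takes the form $d\mathbf{x}/dt = A\mathbf{x} + \mathbf{s}$ with the constant forcing term $\mathbf{s}$. The right-hand side $F(\mathbf{x}) = A\mathbf{x} + \mathbf{s}$ is affine in $\mathbf{x}$, hence globally Lipschitz on $\mathbb{R}^n$ with Lipschitz constant bounded by any matrix norm $\|A\|$. The classical Picard--Lindel\"of theorem therefore supplies a unique solution defined for all $t \ge 0$ (in fact for all $t \in \mathbb{R}$) with $\mathbf{x}(0) = \mathbf{x}_0$.

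To make the argument self-contained, I would write down the solution explicitly using the matrix exponential and Duhamel's (variation-of-constants) formula,
\[
\mathbf{x}(t) = e^{At}\mathbf{x}_0 + \int_0^t e^{A(t-\tau)} \mathbf{s}\, d\tau,
\]
and verify by direct differentiation that this $\mathbf{x}(t)$ satisfies both equation~(\ref{eq:3}) and the initial condition, which establishes existence.

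For uniqueness, I would take two candidate solutions $\mathbf{x}_1, \mathbf{x}_2$ sharing the initial value $\mathbf{x}_0$ and observe that their difference $\mathbf{y} = \mathbf{x}_1 - \mathbf{x}_2$ satisfies the homogeneous linear system $d\mathbf{y}/dt = A\mathbf{y}$ with $\mathbf{y}(0) = \mathbf{0}$. A standard Gronwall estimate (or the invertibility of $e^{At}$) forces $\mathbf{y} \equiv \mathbf{0}$, proving uniqueness.

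There is no real obstacle here: the proposition is essentially a restatement of the existence and uniqueness theorem for linear constant-coefficient ODEs. The only decision is stylistic, namely how much of the classical machinery to reproduce versus simply cite; I would lean toward a short, self-contained paragraph giving the Duhamel formula and the one-line uniqueness argument, since this keeps the paper readable without repeating standard material at length.
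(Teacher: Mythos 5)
Your proposal is correct and matches the paper's proof, which likewise observes that the right-hand side is affine in $\mathbf{x}$, hence Lipschitz continuous, and invokes the standard existence-and-uniqueness theorem. The additional Duhamel formula and Gronwall argument you sketch are consistent elaborations of the same idea, which the paper simply omits.
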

\begin{proof}
This can easily be verified as the right hand side of the dynamics,
$f(\mathbf{x})=\mathcal{L}(G)\cdot\mathbf{x}-D\cdot\mathbf{x}+\mathbf{s}$
is affine in $\mathbf{x}$ and thus also \emph{Lipschitz }continuous.
Thus the existence of unique continuous solution is guaranteed. 
\end{proof}

The next question to be answered is to identify the conditions under
which the SD dynamics possess steady state solution(s). In order to
derive the necessary and sufficient conditions for existence of steady
state solution we define a \emph{complementary digraph}, \nomenclature{$G^{\star}$}{Complementary digraph of $G$, which is formed by directing all synthesis and degradation edges to new vertex $*$}$G^{\star}$,
which is formed by defining new vertex, $\star$, such that 
\[
\star\ce{->[s_{i}]}i\:\mbox{ or }\, i\ce{->[d_{i}]}\star
\]

For the sake of simplicity we will divide our results into two cases:
when $G^{\star}$ is a strongly connected digraph and when it is not
strongly connected.

\subsection{\label{sub:SC framework}Strongly connected case}

Assume that complementary digraph $G^{\star}$ is strongly connected.
Let $F$ denote the matrix of the Laplacian minus the degradation
matrix $D$: 

\begin{equation}
F=\mathcal{L}(G)-D=\left(\begin{array}{ccc|ccc}
\boxed{\mathcal{L}_{1}-D_{1}} & \cdots & \mathbf{0} & \mathbf{0} & \cdots & \mathbf{0}\\
\vdots & \ddots & \vdots & \vdots & \ddots & \vdots\\*
+ & \cdots & \boxed{\mathcal{L}_{p}-D_{p}} & \mathbf{0} & \cdots & \mathbf{0}\\
\hline + & \cdots & + & \boxed{\mathcal{L}_{p+1}-D_{p+1}} & \mathbf{0} & \mathbf{0}\\
\vdots & \ddots & \vdots & \vdots & \ddots & \vdots\\*
+ & \cdots & + & \mathbf{0} & \mathbf{0} & \boxed{\mathcal{L}_{p+q}-D_{p+q}}
\end{array}\right)\label{eq:partition}
\end{equation}
Suppose that there is some index $m\in\{p+1,\dots,p+q\}$ for which
$D_{m}\equiv0$. Then $\mathcal{L}_{m}-D_{m}=\mathcal{L}_{m}=\mathcal{L}(C_{m})$,
which implies that $C_{m}$ is preserved as a tSCC in the complementary
digraph $G^{\star}$. In this case, the vertices corresponding to
the tSCC $C_{m}$ cannot be reached from any other SCC of $G^{\star}$,
which in turn contradicts the fact that graph $G^{\star}$ is strongly
connected. Thus each matrix $\mathcal{L}_{i}-D_{i}$ is a perturbed
Laplacian matrix of some strongly connected digraph, so that from
Remark \ref{lem:perturbed} each $\mathcal{L}_{i}-D_{i}$ is a non-singular
matrix for $i=p+1,\cdots,p+q$. For $i=1,\cdots,p$ as each of the
$\mathcal{L}_{i}$'s is already a \emph{perturbed} matrix, ``perturb''-ing
them further does not change the fact that they are non-singular.
Thus the matrix $F$ is a non-singular matrix, since its diagonal
components are all non-singular matrices and the unique steady state
solution is given algebraically as

\nomenclature{$\mathbf{x}_{\scriptstyle{ES}}$}{Steady state solution}
\begin{equation}
\mathbf{x}_{{\scriptscriptstyle ES}}=-(\mathcal{L}(G)-D)^{-1}\cdot\mathbf{s}\,.\label{eq:4}
\end{equation}
 However, we are interested in computing the ES by means of graph
theory. Towards this end, we define a change of variables 
\[
\mathbf{x}=\mathbf{y}-(\mathcal{L}(G)-D)^{-1}\cdot\mathbf{s}\,.
\]
This substitution transforms the original SD dynamics into 
\begin{equation}
\frac{d\mathbf{y}}{dt}=(\mathcal{L}(G)-D)\cdot\mathbf{y}\,.\label{eq:2}
\end{equation}
From Proposiotion \ref{cor:alg and geo} we know that eigenvalues
of the Laplacian\emph{, }$\mathcal{L}(G)$,\emph{ }satisfy $Re(\lambda)\le0$,
where equality holds if and only if $\lambda=0$. The proof of this
result follows from applying the \emph{Gershgorin theorem }to the
columns of the matrix $\mathcal{L}(G)$. Then the matrix $D$ in $\mathcal{L}(G)-D$
shifts the centers of the Gershgorin discs further to left on the
real line without changing their radii, so we will still have $Re(\lambda)\le0$
for eigenvalues of the matrix $\mathcal{L}(G)-D$. On the other hand,
the matrix $\mathcal{L}(G)-D$ is non-singular, so it follows that
$Re(\lambda)<0$. This result in turn implies that solution of the
system given in (\ref{eq:2}) converges to the trivial steady state,
$\mathbf{y}_{{\scriptscriptstyle ES}}=0$. Thus we can now state the
following theorem.
\begin{theorem}
\label{thm:Given-a-strongly}Given a strongly connected digraph $G$,
the SD dynamics (\ref{eq:3}) have a unique stable steady state solution.
\end{theorem}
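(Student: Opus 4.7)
The plan is to establish existence and uniqueness of the steady state via non-singularity of the coefficient matrix, then convert the affine SD system to a homogeneous linear one whose decay to the origin gives asymptotic stability.

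First I would observe that since $G$ is strongly connected and $D \not\equiv 0$ by the standing assumption of this section, the matrix $F = \mathcal{L}(G) - D$ is precisely a perturbed Laplacian of the single SCC $G$ (in the sense of the paragraph preceding Remark \ref{lem:perturbed}, with perturbation diagonal $\Delta = D$). Applying Remark \ref{lem:perturbed} directly yields that $F$ is non-singular. Consequently the algebraic equation $(\mathcal{L}(G) - D)\mathbf{x} + \mathbf{s} = \mathbf{0}$ admits the unique solution
\[
\mathbf{x}_{{\scriptscriptstyle ES}} = -(\mathcal{L}(G) - D)^{-1}\mathbf{s},
\]
which is the unique equilibrium of the SD dynamics (\ref{eq:3}).

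Next, to analyze stability, I would introduce the translation $\mathbf{y} = \mathbf{x} - \mathbf{x}_{{\scriptscriptstyle ES}}$, which converts (\ref{eq:3}) into the homogeneous linear system
\[
\frac{d\mathbf{y}}{dt} = (\mathcal{L}(G) - D)\cdot \mathbf{y}.
\]
Stability of $\mathbf{x}_{{\scriptscriptstyle ES}}$ for (\ref{eq:3}) is equivalent to convergence of $\mathbf{y}$ to $\mathbf{0}$, which in turn holds if every eigenvalue $\lambda$ of $F$ satisfies $\mathrm{Re}(\lambda) < 0$. To obtain this, I would apply the Gershgorin circle theorem column-wise. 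Because of the definition (\ref{eq:Laplacian Matrix}), the $j$-th Gershgorin column-disc of $\mathcal{L}(G)$ has center $-\sum_{m\ne j} e_{mj}$ and radius $\sum_{m\ne j} e_{mj}$, so each such disc lies in the closed left half-plane and touches the imaginary axis only at the origin. Subtracting the non-negative diagonal matrix $D$ shifts each disc's center further left by $d_j \ge 0$ while leaving its radius unchanged; hence every eigenvalue $\lambda$ of $\mathcal{L}(G) - D$ satisfies $\mathrm{Re}(\lambda) \le 0$. The non-singularity established in the first paragraph rules out $\lambda = 0$, so the containment must be strict: $\mathrm{Re}(\lambda) < 0$ for every eigenvalue. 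This yields $\mathbf{y}(t) \to \mathbf{0}$, and hence $\mathbf{x}(t) \to \mathbf{x}_{{\scriptscriptstyle ES}}$, for every initial condition.

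The main subtlety I anticipate is upgrading the weak bound $\mathrm{Re}(\lambda) \le 0$ supplied by Gershgorin to the strict bound $\mathrm{Re}(\lambda) < 0$ required for asymptotic stability. Gershgorin alone leaves open the possibility of marginal eigenvalues on the imaginary axis; what closes the gap is Remark \ref{lem:perturbed}, which is invoked twice — once to produce the explicit equilibrium $\mathbf{x}_{{\scriptscriptstyle ES}}$ and once, via non-singularity of $\mathcal{L}(G) - D$, to exclude $\lambda = 0$ from the spectrum. The cooperation between these two uses of the perturbed-Laplacian non-singularity result is the conceptual heart of the argument.
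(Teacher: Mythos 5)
Your proposal is correct and follows essentially the same route as the paper: non-singularity of $F=\mathcal{L}(G)-D$ via the perturbed-Laplacian remark, the affine change of variables to a homogeneous system, and a column-wise Gershgorin argument whose weak bound $\mathrm{Re}(\lambda)\le 0$ is upgraded to strict negativity by non-singularity. The only (harmless) difference is that you apply Remark \ref{lem:perturbed} directly to $G$ as a single SCC, whereas the paper derives the non-singularity of $F$ block-by-block from the SCC decomposition under the section's hypothesis that the complementary digraph $G^{\star}$ is strongly connected.
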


The symbolic computation of the ES solution using (\ref{eq:4}) can
be very expensive even for small number of vertices. Therefore we
restate an algorithm, given in \cite{Gunawardena2012}, which uses
graphical structure of graph $G^{\star}$ to calculate steady state
solution given in (\ref{eq:4}). 

Let $\mathbbm{1}=(1,\cdots,1)^{T}$ be a vector of all ones. At steady
state we have $\frac{d\mathbf{x}}{dt}=\mathbf{0}$ and using the fact
that $\mathbbm{1}^{T}\cdot\mathcal{L}(G)=\mathbf{0}$ it follows from
(\ref{eq:New dynamics}) that 
\begin{equation}
d_{1}x_{1}+\dots+d_{n}x_{n}=s_{1}+\dots+s_{n}\,.\label{eq:balance}
\end{equation}
In other words at steady state we should have an overall balance in
synthesis and degradation. The Laplacian $\mathcal{L}(G^{\star})$
of the digraph $G^{\star}$ can then be related to the Laplacian $\mathcal{L}(G)$
of the digraph $G$ 

\begin{equation}
\mathcal{L}(G^{\star})=\left(\begin{array}{c|c}
\mathcal{L}(G) & \mathbf{0}\\
\hline \mathbf{0} & \mathbf{0}
\end{array}\right)+\left(\begin{array}{c|c}
-D & \mathbf{s}\\
\hline \mathbbm{1}^{T}\cdot D & -\mathbbm{1}^{T}\cdot\mathbf{s}
\end{array}\right)\label{eq:5}
\end{equation}

Suppose now that we have overall balance in synthesis and degradation
then using (\ref{eq:5}) it is easy to see that $(x_{1},\cdots,x_{n},1)$
is a steady state of 
\begin{equation}
\frac{d\mathbf{x}}{dt}=\mathcal{L}(G^{\star})\cdot\mathbf{x}\label{eq:G plus system}
\end{equation}
 if and only if $(x_{1},\cdots,x_{n})$ is a steady state of SD dynamics
given in (\ref{eq:3}). Since $G^{\star}$ is strongly connected,
the MTT provides a basis element for the kernel of the Laplacian matrix
$\mathcal{L}(G^{\star})$, $\ker\left\{ \mathcal{L}(G^{\star})\right\} =\text{span}\{\boldsymbol{\rho}^{G^{\star}}\}$
\cite{Gunawardena2012}. Consequently, the unique steady state $\mathbf{x}_{{\scriptscriptstyle ES}}$
is given by

\begin{equation}
\left(\mathbf{x}_{{\scriptscriptstyle ES}}\right)_{i}=\frac{\left(\boldsymbol{\rho}^{G^{\star}}\right)_{i}}{\left(\boldsymbol{\rho}^{G^{\star}}\right)_{\star}}\label{eq: SC steady state MTT}
\end{equation}

Since any steady state solution of (\ref{eq:G plus system}) can be
written as scalar multiple of kernel element, $\boldsymbol{\rho}^{G^{\star}}$,
that single degree of freedom is used to guarantee that $\left(\mathbf{x}_{{\scriptscriptstyle ES}}\right)_{\star}=1$
(synthesis and degradation vertex, $\star$). This condition also
ensures that overall balance in synthesis and degradation (\ref{eq:balance})
is satisfied.

\subsection{General Case}

In contrast to the strongly-connected case, the steady state solutions
do not always exist in the general case. First, we will derive conditions
to assure the existence of a steady state solution. Then, we will
show that provided we have a steady state solution $\mathbf{x}_{{\scriptscriptstyle ES}}$,
the system converges to this $\mathbf{x}_{{\scriptscriptstyle ES}}$
as $t\to\infty$. Third, we provide a framework for construction of
$\mathbf{x}_{{\scriptscriptstyle ES}}$ using the underlying graph
structure of graph $G$ with illustration of results using a hypothetical
example. 

In the case that the digraph $G^{\star}$ is not strongly connected,
in the partition of the matrix $F$ (\ref{eq:partition}) there is
at least one $i\in\left\{ p+1,\dots,p+q\right\} $ such that $D_{i}\equiv\mathbf{0}$.
Let $\left\{ i_{1},\cdots,i_{k}\right\} \subseteq\left\{ p+1,\cdots,p+q\right\} $
be a set for which $D_{i_{1}}=\dots=D_{i_{k}}\equiv\mathbf{0}$, then
we can relabel the vertices corresponding to the tSCC\emph{ }such
that matrices $\mathcal{L}_{i_{1}},\dots,\mathcal{L}_{i_{k}}$ are
positioned in the lower right of matrix $F$, 
\begin{align*}
F=\mathcal{L}(G)-D & =\left(\begin{array}{ccc|ccc}
\boxed{\mathcal{L}_{1}-D_{1}} & \cdots & \mathbf{0} & \mathbf{0} & \cdots & \mathbf{0}\\
\vdots & \ddots & \vdots\\*
+ & + & \boxed{\mathcal{L}_{p+q-k}-D_{p+q-k}} & \mathbf{0} & \cdots & \mathbf{0}\\
\hline + & \cdots & + & \boxed{\mathcal{L}_{p+q-k+1}} & \cdots & \mathbf{0}\\
\vdots & \ddots & \vdots & \vdots & \ddots & \vdots\\*
+ & \cdots & + & \mathbf{0} & \cdots & \boxed{\mathcal{L}_{p+q}}
\end{array}\right)
\end{align*}

\begin{equation}
=\left(\begin{array}{ccc|ccc}
\boxed{\mathcal{M}_{1}} & \cdots & \mathbf{0} & \mathbf{0} & \cdots & \mathbf{0}\\
\vdots & \ddots & \vdots & \vdots & \ddots & \vdots\\*
+ & + & \boxed{\mathcal{M}_{r}} & \mathbf{0} & \cdots & \mathbf{0}\\
\hline + & \cdots & + & \boxed{\mathcal{L}_{r+1}} & \mathbf{0} & \mathbf{0}\\
\vdots & \ddots & \vdots & \vdots & \ddots & \vdots\\*
+ & \cdots & + & \mathbf{0} & \mathbf{0} & \boxed{\mathcal{L}_{r+k}}
\end{array}\right)=\left(\begin{array}{c|c}
N & \mathbf{0}\\
\hline B & T
\end{array}\right)\label{eq:General case Decomposition}
\end{equation}
where \emph{$r=p+q-k$}, each $\mathcal{M}_{i}=\mathcal{L}_{i}-D_{i}=\mathcal{L}(C_{i})-\Delta_{i}-D_{i}$
is a \emph{perturbed} Laplacian matrix of some SCC $C_{i}$\emph{
}and each $\mathcal{L}_{i}$ corresponds to the Laplacian matrix of
some tSCC\emph{ }in graph $G^{\star}$\emph{.} This relabeling is
always possible because the labeling procedure described in Section
\ref{sec:Preliminary-results} does not provide any restriction on
individual labeling of vertices located in the set of tSCCs.

Next we present a theorem, which provides the necessary and sufficient
conditions in order for an ES to exist. For that we partition the
synthesis vector $\mathbf{s}$ such that it matches up with the partition
of the matrix $F$, 

\[
F=\left(\begin{array}{c|c}
N & \mathbf{0}\\
\hline B & T
\end{array}\right),\qquad\mathbf{s}=\left(\begin{array}{c}
\mathbf{s}'\\
\hline \mathbf{s}''
\end{array}\right)\,.
\]

\begin{theorem}
\label{thm:BNS}When $G^{\star}$ is not strongly connected, the necessary
and sufficient conditions for existence of an ES solution are\end{theorem}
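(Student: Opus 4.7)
The plan is to reduce $F\mathbf{x}=-\mathbf{s}$ to a pair of block equations using the decomposition in (\ref{eq:General case Decomposition}) and then apply the Fredholm alternative to the bottom block. Write $\mathbf{x}=\bigl(\mathbf{x}',\mathbf{x}''\bigr)^{T}$ conformally with the partition $F=\left(\begin{smallmatrix}N & \mathbf{0}\\ B & T\end{smallmatrix}\right)$ and $\mathbf{s}=\bigl(\mathbf{s}',\mathbf{s}''\bigr)^{T}$. The steady state equation $F\mathbf{x}=-\mathbf{s}$ is then equivalent to the pair
\[
N\mathbf{x}'=-\mathbf{s}',\qquad T\mathbf{x}''=-\mathbf{s}''-B\mathbf{x}'.
\]
Since each diagonal block $\mathcal{M}_{i}$ of $N$ is a perturbed Laplacian of a strongly connected subgraph (either because $C_{i}$ is non-terminal in $G$, which supplies the perturbation $\Delta_{i}\ne 0$, or because $D_{i}\not\equiv\mathbf{0}$), Remark \ref{lem:perturbed} gives that each $\mathcal{M}_{i}$ is non-singular; block lower-triangularity then makes $N$ itself non-singular. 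Thus $\mathbf{x}'=-N^{-1}\mathbf{s}'$ is uniquely determined, so the whole question reduces to the solvability of
\[
T\mathbf{x}''=-\mathbf{s}''+BN^{-1}\mathbf{s}'.
\]

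First I would analyse the right-hand block $T$. By construction, $T$ is block-diagonal with blocks $\mathcal{L}_{r+1},\dots,\mathcal{L}_{r+k}$, each of which is the \emph{full} Laplacian of a degradation-free tSCC of $G$. For any Laplacian $\mathcal{L}(C)$, definition (\ref{eq:Laplacian Matrix}) yields $\mathbbm{1}^{T}\mathcal{L}(C)=\mathbf{0}$, so $\mathbbm{1}$ spans the left null space of each block of $T$. Hence, writing $\mathbf{c}=-\mathbf{s}''+BN^{-1}\mathbf{s}'$ and splitting $\mathbf{c}=(\mathbf{c}_{1},\dots,\mathbf{c}_{k})$ according to the blocks, the Fredholm alternative applied block-by-block gives that a solution $\mathbf{x}''$ exists if and only if
\[
\mathbbm{1}^{T}\mathbf{c}_{j}=0\qquad\text{for every }j=1,\dots,k,
\]
i.e.\ $\mathbbm{1}^{T}\mathbf{s}''_{j}=\mathbbm{1}^{T}B_{j}N^{-1}\mathbf{s}'$ for each degradation-free tSCC block $j$, where $B_{j}$ is the corresponding block row of $B$. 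This is the core algebraic equivalence; everything else is translating it into the graph language claimed in the theorem.

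Next I would identify $\mathbbm{1}^{T}\mathbf{s}''_{j}$ as the total synthesis entering the tSCC $C_{r+j}$ and $-\mathbbm{1}^{T}B_{j}N^{-1}\mathbf{s}'=\mathbbm{1}^{T}B_{j}\mathbf{x}'$ as the total flux flowing from the ``upper'' SCCs into $C_{r+j}$ at the unique solution $\mathbf{x}'=-N^{-1}\mathbf{s}'$. Using the non-positivity of $N^{-1}$ (the result promised in Section \ref{sec: negativity of inverse}, which ensures $\mathbf{x}'\ge\mathbf{0}$), together with $B\ge\mathbf{0}$ and $\mathbf{s}''\ge\mathbf{0}$, the two sides of the balance condition have opposite signs and the equality forces each of them to vanish. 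This would give the two clean conditions that I expect to appear in the theorem: (i) $s_{i}=0$ for every vertex $i$ in a degradation-free tSCC, and (ii) every row of $B_{j}$ is annihilated by $\mathbf{x}'$, which, tracing flux back along paths in $G^{\star}$, is equivalent to the statement that no vertex with positive synthesis lies on a path leading into any degradation-free tSCC of $G$, i.e.\ every synthesis vertex can reach $\star$ in $G^{\star}$.

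The main obstacle is the last step, the graph-theoretic reformulation, because it relies essentially on the entrywise sign structure of $N^{-1}$. The purely algebraic equivalence (existence $\Longleftrightarrow$ the $k$ balance equations) is immediate once the block decomposition and the Fredholm alternative are in place; the work is in showing that under the non-negativity constraints native to the SD setting these $k$ equations collapse to the two stated conditions on synthesis and reachability. Since the paper explicitly devotes Section \ref{sec: negativity of inverse} to proving $-N^{-1}\ge\mathbf{0}$, I would invoke that result here and trace the vanishing of $\mathbbm{1}^{T}B_{j}\mathbf{x}'$ through the block-triangular structure of $N$ (equivalently, through the partial order $\preceq$ on SCCs) to conclude that an upstream SCC from which $C_{r+j}$ is reachable cannot carry any synthesis, which is the graph-theoretic condition the theorem asserts.
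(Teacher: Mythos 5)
Your proposal is correct and follows essentially the same route as the paper: both reduce the steady-state equation via the block decomposition of $F$ to the solvability of $\mathcal{L}_{r+i}\cdot\mathbf{z}^{(i)}=-\mathbf{s}^{(i)}+B_{i}N^{-1}\mathbf{s}'$ (the paper by explicit Gaussian elimination, you by direct block substitution), test solvability against the left kernel $\mathbbm{1}^{T}$ of each tSCC Laplacian, and then use the non-positivity of $N^{-1}$ together with the non-negativity of $B$ and $\mathbf{s}$ to split the single balance equation into the two separate vanishing conditions $\mathbf{s}''\equiv\mathbf{0}$ and $BN^{-1}\mathbf{s}'\equiv\mathbf{0}$. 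Your Fredholm-alternative phrasing and the closing graph-theoretic interpretation are cosmetic refinements of the same argument.
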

\begin{enumerate}
\item $\mathbf{s}''\equiv\mathbf{0}$
\item $B\cdot N^{-1}\cdot\mathbf{s}'\equiv\mathbf{0}$\end{enumerate}
\begin{proof}
Let us first derive the equivalent statements for the existence of
an ES solution. Finding a steady state solution of the system is equivalent
to solving the linear system 
\begin{equation}
\left(\mathcal{L}(G)-D\right)\cdot\mathbf{x}=-\mathbf{s}\,.\label{eq:steady state equivalent system}
\end{equation}
Thus a steady state solution exists if and only if $-\mathbf{s}\in\text{Range}\left\{ \mathcal{L}(G)-D\right\} $.
Let us apply simple row reduction (i.e., Gaussian elimination) to
the augmented matrix $(\begin{array}{c:c}\mathcal{L}(G)-D & \mathbf{s}\end{array})$

\begin{align*} 
&\left(\begin{array}{ccc|ccc:c} 
\boxed{\mathcal{M}_{1}} & \cdots & \mathbf{0} & \mathbf{0} & \cdots & \mathbf{0}\\
\vdots & \ddots & \vdots & \vdots & \ddots & \vdots & -\mathbf{s}'
\\* * & \cdots & \boxed{\mathcal{M}_{r}} & \mathbf{0} & \cdots & \mathbf{0}\\ 
\cmidrule[0.5pt]{1-6} \cmidrule[1pt]{7-7} & B_{1} &  & \boxed{\mathcal{L}_{r+1}} & \mathbf{0} & \mathbf{0} & -\mathbf{s}^{(1)}\\  
& \vdots &  & \vdots & \ddots & \vdots&\vdots\\  & B_{k} &  & \mathbf{0} & \mathbf{0} & \boxed{\mathcal{L}_{r+k}} & -\mathbf{s}^{(k)} \end{array}\right) \\
&\\
\longrightarrow 
& \left(\begin{array}{ccc|ccc:c} 
\boxed{\mathbb{I}_{a_{1}}} & \cdots & \mathbf{0} & \mathbf{0} & \cdots & \mathbf{0}\\ 
\vdots & \ddots & \vdots & \vdots & \ddots & \vdots & -N^{-1}\mathbf{s}'\\ 
\mathbf{0} & \mathbf{0} & \boxed{\mathbb{I}_{a_{r}}} & \mathbf{0} & \cdots & \mathbf{0}\\ 
\cmidrule[0.5pt]{1-6} \cmidrule[1pt]{7-7} & B_{1} &  & \boxed{\mathcal{L}_{r+1}} & \mathbf{0} & \mathbf{0} & -\mathbf{s}^{(1)}\\  
& \vdots &  & \vdots & \ddots & \vdots &\vdots\\  
& B_{k} &  & \mathbf{0} & \mathbf{0} & \boxed{\mathcal{L}_{r+k}} & -\mathbf{s}^{(k)} \end{array}\right)\\ 
&\\
\longrightarrow 
&\left(\begin{array}{ccc|ccc:c} 
\boxed{\mathbb{I}_{a_{1}}} & \cdots & \mathbf{0} & \mathbf{0} & \cdots & \mathbf{0}\\ 
\vdots & \ddots & \vdots & \vdots & \ddots & \vdots & -N^{-1}\mathbf{s}'\\
\mathbf{0} & \mathbf{0} & \boxed{\mathbb{I}_{a_{r}}} & \mathbf{0} & \cdots & \mathbf{0}\\ 
\cmidrule[0.5pt]{1-6} \cmidrule[1pt]{7-7} \mathbf{0} & \cdots & \mathbf{0} & \boxed{\mathcal{L}_{r+1}} & \mathbf{0} & \mathbf{0} & -\mathbf{s}^{(1)}+B_{1}N^{-1}\mathbf{s}'\\ 
\vdots & \ddots & \vdots & \vdots & \ddots & \vdots & \vdots\\ 
\mathbf{0} & \cdots & \mathbf{0} & \mathbf{0} & \mathbf{0} & \boxed{\mathcal{L}_{r+k}} & -\mathbf{s}^{(k)}+B_{k}N^{-1}\mathbf{s}' \end{array}\right)\ . 
\end{align*}

So the above system (\ref{eq:steady state equivalent system}) has
a solution if and only if each partial linear system 
\begin{equation}
\mathcal{L}_{r+i}\cdot\mathbf{z}^{(i)}=-\mathbf{s}^{(i)}+B_{i}\cdot N^{-1}\cdot\mathbf{s}'\label{eq:Eq Condition with L_{r+i}}
\end{equation}
 has a solution. Equation (\ref{eq:Eq Condition with L_{r+i}}) provides
an equivalent condition for the existence of an ES solution of the
SD dynamics. At this point we will prove that (\ref{eq:Eq Condition with L_{r+i}})
is satisfied if and only if two conditions of the theorem are satisfied. 

Let us assume that (\ref{eq:Eq Condition with L_{r+i}}) holds true.
Then each partial linear system has a solution if the following condition
is satisfied:
\begin{equation}
\mathbbm{1}^{T}\cdot\mathcal{L}_{r+i}\cdot\mathbf{z}^{(i)}=\mathbbm{1}^{T}\cdot\mathcal{L}(C_{r+i})\cdot\mathbf{z}^{(i)}=\mathbf{0}=-\mathbbm{1}^{T}\cdot\mathbf{s}^{(i)}+\mathbbm{1}^{T}\cdot B_{i}\cdot N^{-1}\cdot\mathbf{s}'\qquad i\in\left\{ 1,\cdots,k\right\} \,.\label{eq:1Lz}
\end{equation}
To proceed further we need the nontrivial fact that all the entries
of the matrix $N^{-1}$ are non-positive real numbers. For that reason,
we have devoted all of Section \ref{sec: negativity of inverse} for
the proof of this fact as well as presenting a graph theoretical algorithm
for computation of $N^{-1}$. All entries of the vector $\mathbf{s}^{(i)}$
and matrix $B_{i}$ are non-negative real numbers, because all edge
weights are non-negative real numbers by definition. Therefore, each
of the products $B_{i}\cdot N^{-1}\cdot\mathbf{s}'$ are the matrices
with non-positive entries. This in turn implies that both of the summands
in (\ref{eq:1Lz}) are equal to zero, 
\[
-\mathbbm{1}^{T}\cdot\mathbf{s}^{(i)}\equiv\mathbf{0}\textrm{ and }\mathbbm{1}^{T}\cdot B_{i}\cdot N^{-1}\cdot\mathbf{s}'\equiv\mathbf{0}\qquad i\in\left\{ 1,\cdots,k\right\} 
\]
Recall that a sum of non-negative ($\mathbb{R}_{\ge0}$) real numbers
is equal to zero if and only if each of the numbers are equal to zero.
Hence, 
\begin{equation}
\mathbf{s}^{(i)}\equiv\mathbf{0}\textrm{ and }B_{i}\cdot N^{-1}\cdot\mathbf{s}'\equiv\mathbf{0}\qquad i\in\left\{ 1,\cdots,k\right\} \label{eq:two conditions partioned}
\end{equation}
which is equivalent to the two conditions of the theorem,

\begin{equation}
\mathbf{s}''=\left(\mathbf{s}_{1},\cdots,\mathbf{s}_{k}\right)^{T}\equiv\mathbf{0}\textrm{ and }B\cdot N^{-1}\cdot\mathbf{s}'\equiv\mathbf{0}\,.\label{eq: 1}
\end{equation}

Conversely, assume that two conditions of the theorem are satisfied.
Then it easy to observe that (\ref{eq:two conditions partioned})
also holds true. Consequently, it follows that the linear system  (\ref{eq:Eq Condition with L_{r+i}})
reduces to
\begin{equation}
\mathcal{L}_{r+i}\cdot\mathbf{z}^{(i)}=\mathcal{L}(C_{r+i})\cdot\mathbf{z}^{(i)}=-\mathbf{s}_{i}+B_{i}\cdot N^{-1}\cdot\mathbf{s}'=\mathbf{0}\label{eq:Reduces to LD}
\end{equation}
which always has a solution. Moreover, the solution of the above linear
system (\ref{eq:Reduces to LD}) can be constructed graph theoretically
by Proposition \ref{prop:SCC kernel}. 
\end{proof}

The first condition of the theorem, $\mathbf{s}''\equiv\mathbf{0}$,
can be interpreted as follows: a necessary condition for the existence
of a steady state is that if a tSCC\emph{ }does not have degradation
edge, it should also not have a synthesis edge. On the other hand,
one can also visualize these conditions in terms of chemical reactions.
If there is continuous inflow of substrates into the production part
of the reaction and a lack of outflow, then reaction will grow without
bound. The second condition of the theorem identifies nodes without
degradation, which also contribute directly (or indirectly) to tSCCs.
Indeed, this type of nodes also cause the SD dynamics to grow without
bound. 

Next we show that (\ref{eq:New dynamics}) can be transformed into
homogeneous system of linear differential equations, provided that
both conditions of Theorem \ref{thm:BNS} are fulfilled. As in the
previous section we denote matrix $L(G)-D$ with $F$, and partition
$F$ and $\mathbf{s}$ as, 

\[
F=\left(\begin{array}{c|c}
N & \mathbf{0}\\
\hline B & T
\end{array}\right)\qquad\mathbf{s}=\begin{pmatrix}\mathbf{s}'\\
\hline \mathbf{0}
\end{pmatrix}
\]
where $F\in\mathbb{R}_{\ge0}^{n\times n},\, N\in\mathbb{R}_{\ge0}^{m\times m},\, B\in\mathbb{R}_{\ge0}^{m\times(n-m)},\, T\in\mathbb{R}_{\ge0}^{(n-m)\times(n-m)}$
and $\mathbf{s}\in\mathbb{R}_{\ge0}^{n\times1},\,\mathbf{s}'\in\mathbb{R}_{\ge0}^{m\times1}$.
Let us also define a matrix $Q\in\mathbb{R}_{\le0}^{n\times n}$ 
\[
Q=\left(\begin{array}{c|c}
N^{-1} & \mathbf{0}\\
\hline \mathbf{0} & \mathbf{0}
\end{array}\right)\,,
\]
to be used in the change of variable $\mathbf{x}=\mathbf{y}-Q\cdot\mathbf{s}$.
This substitution transforms (\ref{eq:New dynamics}) into 
\[
\frac{d\mathbf{x}}{dt}=\frac{d\mathbf{y}}{dt}=F\cdot\mathbf{y}-F\cdot Q\cdot\mathbf{s}+\mathbf{s}=F\cdot\mathbf{y}-\begin{pmatrix}\mathbf{s}'\\
\hline B\cdot N^{-1}\cdot\mathbf{s}'
\end{pmatrix}+\begin{pmatrix}\mathbf{s}'\\
\hline \mathbf{0}
\end{pmatrix}\,.
\]
Assuming that an ES solution of the SD dynamics exists, then by Theorem
\ref{thm:BNS} we have that $B\cdot N^{-1}\cdot\mathbf{s}'\equiv\mathbf{0}$,
from which it follows that 
\begin{equation}
\frac{d\mathbf{y}}{dt}=F\cdot\mathbf{y}\,.\label{eq:System with matrix F}
\end{equation}

\begin{theorem}
For any given initial condition \label{lem:System--converges} the
dynamics defined in (\ref{eq:System with matrix F}) converges to
a unique steady state as $t\to\infty$.\end{theorem}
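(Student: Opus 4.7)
The plan is to apply Theorem \ref{thm:Convergence} to the matrix $F = \mathcal{L}(G)-D$ partitioned as in \eqref{eq:General case Decomposition}. So I would reduce the proof to verifying the two eigenvalue conditions there: (i) every eigenvalue of $F$ has either negative real part or equals zero, and (ii) $\mathrm{alg}_F(0)=\mathrm{geo}_F(0)$. Since $F$ is block lower-triangular with diagonal blocks $\mathcal{M}_1,\dots,\mathcal{M}_r,\mathcal{L}_{r+1},\dots,\mathcal{L}_{r+k}$, the spectrum of $F$ decomposes as the union of the spectra of these diagonal blocks, which will let me treat each block separately.

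For condition (i), I would argue block by block. Each $\mathcal{M}_i=\mathcal{L}(C_i)-\Delta_i-D_i$ is a perturbed Laplacian of a strongly connected digraph; by Remark \ref{lem:perturbed} it is non-singular, and the same Gershgorin argument used in the proof of Theorem \ref{thm:Given-a-strongly} shows that subtracting the non-negative diagonal $\Delta_i+D_i$ only shifts the Gershgorin discs further left, so every eigenvalue has $Re(\lambda)\le 0$; combined with non-singularity this gives $Re(\lambda)<0$. Each $\mathcal{L}_{r+i}=\mathcal{L}(C_{r+i})$ is the Laplacian of a strongly connected digraph, so by Corollary \ref{cor:alg and geo} its eigenvalues satisfy $\lambda=0$ or $Re(\lambda)<0$, and its zero eigenvalue has algebraic and geometric multiplicity equal to $1$. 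Together this delivers condition (i) and also shows $\mathrm{alg}_F(0)=k$, since only the $k$ blocks $\mathcal{L}_{r+1},\dots,\mathcal{L}_{r+k}$ contribute a zero eigenvalue and each contributes exactly one.

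For condition (ii), I would compute $\ker F$ directly from the block form. Writing $\mathbf{v}=(\mathbf{v}',\mathbf{v}'')^T$ according to the $(N,T)$ partition, $F\mathbf{v}=\mathbf{0}$ amounts to $N\mathbf{v}'=\mathbf{0}$ and $B\mathbf{v}'+T\mathbf{v}''=\mathbf{0}$. Since $N$ is itself block lower-triangular with the non-singular blocks $\mathcal{M}_1,\dots,\mathcal{M}_r$ on the diagonal, $N$ is invertible and forces $\mathbf{v}'=\mathbf{0}$, which then reduces the second equation to $T\mathbf{v}''=\mathbf{0}$. Because $T$ is block diagonal with the Laplacian blocks $\mathcal{L}(C_{r+1}),\dots,\mathcal{L}(C_{r+k})$ on the diagonal and each such Laplacian has a one-dimensional kernel (Proposition \ref{prop:SCC kernel}), $\dim\ker T = k$, hence $\mathrm{geo}_F(0)=k=\mathrm{alg}_F(0)$.

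Invoking Theorem \ref{thm:Convergence} then yields that $\mathbf{y}(t)$ converges to some steady state for every initial condition; uniqueness of the limit for a given initial condition is automatic. The only place where care is needed is in the verification of condition (i): one has to confirm that the further subtraction of $D_i$ does not break the Gershgorin argument and does not create spurious zero eigenvalues, which is exactly the point at which non-singularity of each perturbed Laplacian $\mathcal{M}_i$ (Remark \ref{lem:perturbed}) does the work. I do not expect any genuine obstacle; the argument is essentially a block-triangular extension of the convergence result for Laplacian dynamics established in the preliminaries.
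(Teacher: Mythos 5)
Your proposal is correct and follows essentially the same route as the paper: verify the two hypotheses of Theorem \ref{thm:Convergence} for $F$, using Gershgorin for the sign condition and the block structure to show $alg_F(0)=geo_F(0)=k$. The only cosmetic difference is that you run the Gershgorin argument block by block on $\mathcal{M}_1,\dots,\mathcal{M}_r,\mathcal{L}_{r+1},\dots,\mathcal{L}_{r+k}$ while the paper applies it once to the columns of the full matrix $F$; both are valid and rely on the same facts.
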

\begin{proof}
We will prove this theorem\emph{ }by showing that matrix $F$ satisfies
both conditions of Theorem \ref{thm:Convergence}. First note that
by definition, the matrix $F$ is a Laplacian matrix minus a non-negative
diagonal matrix, $F=\mathcal{L}(G)-D$. Hence, it follows that 
\[
\sum_{v\ne i}|\left(F\right)_{vi}|=\sum_{v\ne i}\left(\mathcal{L}(G)\right)_{vi}=|\left(\mathcal{L}(G)\right)_{ii}|\le|\left(\mathcal{L}(G)\right)_{ii}|+d_{i}=\left|\left(F\right)_{ii}\right|
\]
Therefore, if we apply Gerschgorin's theorem to the columns of the
matrix $F$, we see that each eigenvalue of $F$ is located in the
discs of the form 
\[
\left\{ z\in\mathbf{C}\ |\ \left|z+|\left(\mathcal{L}(G)\right)_{ii}|+d_{i}\right|\le|(\mathcal{L}(G))_{ii}|\right\} \,.
\]
A disc touches the $y-axis$ from the left hand side if and only if
$|\left(\mathcal{L}(G)\right)_{ii}|+d_{i}=|\left(\mathcal{L}(G)\right)_{ii}|$,
or $d_{i}=0$. Hence for an eigenvalue, $\lambda$, of the matrix
$F$ we conclude that $Re(\lambda)\le0$, where equality holds if
and only if $\lambda=0$. Thus, the matrix $F$ satisfies first condition
of Theorem \ref{thm:Convergence}. 

On the other hand, from the lower-block diagonal structure of the
matrix $F$ and Corollary \ref{cor:alg and geo} , it follows that
\begin{align*}
geo_{F}(0) & =\dim\left\{ \ker F\right\} =\dim\left\{ \ker N\right\} +\dim\left\{ \ker\mathcal{L}_{r+1}\right\} +\cdots+\dim\left\{ \ker\mathcal{L}_{r+k}\right\} \\
 & =0+geo_{\mathcal{L}_{r+1}}(0)+\cdots+geo_{\mathcal{L}_{r+k}}(0)=k\,.
\end{align*}
Remember that each matrix $\mathcal{L}_{r+i}$ is the Laplacian matrix
of the tSCC $C_{r+i}$,\emph{ }so the $\dim\ker\left\{ \mathcal{L}_{r+i}\right\} =geo_{\mathcal{L}_{r+i}}(0)=1=alg_{\mathcal{L}_{r+i}}(0)$.\emph{
}Again the block diagonal structure of $F$ suggests that
\begin{align*}
alg_{F}(0) & =alg_{\mathcal{M}_{1}}(0)+\cdots+alg_{\mathcal{M}_{r}}(0)+alg_{\mathcal{L}_{r+1}}(0)+\cdots+alg_{\mathcal{L}_{r+k}}(0)\\
 & =0+\cdots+0+1+\cdots+1=k\,.
\end{align*}
 Since each matrix $\mathcal{M}_{i}$ is non-singular, none of their
eigenvalues are zero. Hence, it follows that the dynamics (\ref{eq:System with matrix F})
satisfy the second condition of Theorem \ref{thm:Convergence}, $alg_{F}(0)=geo_{F}(0)=k$.
Therefore, the matrix $F$ satisfies both conditions of Theorem\emph{
}\ref{thm:Convergence}, which in turn implies that the dynamics defined
in (\ref{eq:System with matrix F}) converge to a unique ES for any
given initial condition. 
\end{proof}

Now we will provide a framework for finding the ES solution of the
SD dynamics. Define $R$ as an $n\times k$ matrix whose columns are
a basis elements of the column null space (right kernel) of the matrix
$F$.%
\footnote{Recall that dimensions of row and column null spaces of a matrix are
same. In fact, from Corollary \ref{cor:alg and geo} we have this
dimension equal to number of tSCC of digraph $G^{\star}$. %
}\emph{ }Analogously, define $L$ such that is a $k\times n$ matrix
whose rows are basis elements of the row null space (left kernel)
of the matrix $F$. Then these matrices satisfy
\[
F\cdot R=\mathbf{0}\text{ and }L\cdot F=\mathbf{0}
\]
 Naturally, $L$ and $R$ can be chosen so that the following equation
hold

\begin{equation}
L\cdot R=\mathbb{I}_{k}\,.\label{eq:LR}
\end{equation}
 Since matrices $L$ and $R$ are not uniquely defined, equation (\ref{eq:LR})
serves as a normalization condition. In fact, in the subsequent section
we discuss an example of such normalization. The following lemma gives
a representation of the ES of the dynamics defined in (\ref{eq:System with matrix F})
in terms of matrices $R$ and $L$. 
\begin{lemma}
\label{lem:steady state solution}Assume that $F$ is a matrix for
which  (\ref{eq:LR}) holds and when coupled with the initial condition
$\mathbf{y}(0)=\mathbf{y}_{0}$ the solution of system (\ref{eq:System with matrix F})
converges to a steady state $\mathbf{y}_{{\scriptscriptstyle ES}}$
as $t\to\infty$. Then $\mathbf{y}_{{\scriptscriptstyle ES}}=R\cdot L\cdot\mathbf{y}_{0}$. \end{lemma}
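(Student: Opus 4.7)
The plan is to exploit the two kernel conditions in turn: $F\cdot R = \mathbf{0}$ pins down the \emph{form} of any equilibrium, and $L\cdot F = \mathbf{0}$ gives a vector of linear conservation laws that pins down the \emph{coefficients} from the initial data.

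First I would observe that $\mathbf{y}_{{\scriptscriptstyle ES}}$ is an equilibrium of (\ref{eq:System with matrix F}), so $F\cdot\mathbf{y}_{{\scriptscriptstyle ES}}=\mathbf{0}$, which means $\mathbf{y}_{{\scriptscriptstyle ES}}\in\ker F$. By construction the columns of $R$ form a basis for $\ker F$, hence there exists a unique $\mathbf{c}\in\mathbb{R}^{k\times 1}$ with $\mathbf{y}_{{\scriptscriptstyle ES}}=R\cdot\mathbf{c}$. The whole task reduces to identifying $\mathbf{c}$ in terms of $\mathbf{y}_{0}$.

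Next I would use the left kernel to produce $k$ conserved quantities for the flow. Multiplying both sides of (\ref{eq:System with matrix F}) on the left by $L$ and using $L\cdot F=\mathbf{0}$ yields
\[
\frac{d}{dt}(L\cdot\mathbf{y})=L\cdot\frac{d\mathbf{y}}{dt}=L\cdot F\cdot\mathbf{y}=\mathbf{0}\,,
\]
so $L\cdot\mathbf{y}(t)$ is constant along trajectories. Evaluating at $t=0$ and letting $t\to\infty$ (which is permissible by the convergence hypothesis $\mathbf{y}(t)\to\mathbf{y}_{{\scriptscriptstyle ES}}$) gives the matching condition $L\cdot\mathbf{y}_{{\scriptscriptstyle ES}}=L\cdot\mathbf{y}_{0}$.

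Finally, combining the two observations, $L\cdot R\cdot\mathbf{c}=L\cdot\mathbf{y}_{0}$, and invoking the normalization (\ref{eq:LR}), which gives $L\cdot R=\mathbb{I}_{k}$, we conclude $\mathbf{c}=L\cdot\mathbf{y}_{0}$ and therefore $\mathbf{y}_{{\scriptscriptstyle ES}}=R\cdot L\cdot\mathbf{y}_{0}$. There is no real obstacle here; the only subtle point is justifying the passage to the limit in the conservation identity, which is immediate from the assumed convergence, and recognizing that the normalization (\ref{eq:LR}) is precisely what makes the projector $R\cdot L$ act as the identity on $\ker F$ while annihilating a complementary subspace consistent with the conserved directions.
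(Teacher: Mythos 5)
Your proposal is correct and follows essentially the same argument as the paper: represent $\mathbf{y}_{{\scriptscriptstyle ES}}$ in the column basis $R$ of $\ker F$, use $L\cdot F=\mathbf{0}$ to show $L\cdot\mathbf{y}(t)$ is conserved and hence $L\cdot\mathbf{y}_{{\scriptscriptstyle ES}}=L\cdot\mathbf{y}_{0}$, and then apply the normalization $L\cdot R=\mathbb{I}_{k}$ to identify the coefficients. The only cosmetic difference is that you obtain the conservation law by differentiating $L\cdot\mathbf{y}$ directly, whereas the paper expands $\exp(Ft)=\mathbb{I}_{n}+F\cdot A(t)$ and kills the second term with $L\cdot F=\mathbf{0}$; both are immediate.
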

\begin{proof}
The solution of the dynamics (\ref{eq:System with matrix F}) can
be given in exponential form as $\mathbf{y}(t)=\exp(Ft)\cdot\mathbf{y}_{0}$,
so\nomenclature{$\mathbb{I}_{n}$}{$n\times n$ identity matrix}
\begin{equation}
\mathbf{y}(t)=\left(\mathbb{I}_{n}+\left(Ft\right)+\frac{(Ft)^{2}}{2!}+\cdots\right)\cdot\mathbf{y}_{0}=\left(\mathbb{I}_{n}+F\cdot A(t)\right)\cdot\mathbf{y}_{0},\label{eq:exponential}
\end{equation}
where $A(t)$ is some matrix with time defined functions as entries.
From (\ref{eq:LR}) and (\ref{eq:exponential}) it follows that $L\cdot\mathbf{y}(t)=L\mathbf{y}_{0}+L\cdot F\cdot A(t)\mathbf{y}_{0}=L\mathbf{y}_{0}$.
Therefore, asymptotically as $t\to\infty$ we find that $L\cdot\mathbf{y}_{{\scriptscriptstyle ES}}=L\cdot\mathbf{y}(0)$.
On the other hand steady state \textbf{$\mathbf{y}_{{\scriptscriptstyle ES}}$},
should also satisfy $\frac{d\mathbf{{\scriptstyle \mathbf{y}}}_{{\scriptscriptstyle {\scriptscriptstyle ES}}}}{dt}=\mathbf{0}=F\cdot\mathbf{y}_{{\scriptscriptstyle ES}}$,
then vector $\mathbf{y}_{{\scriptscriptstyle ES}}$ is element of
the column null space of the matrix $F$. In other words, the vector
$\mathbf{y}_{{\scriptscriptstyle ES}}$ can be written as linear combination
of the elements of the column null space, so there exist some vector
\nomenclature{$\mathbb{R}^{m\times n}$}{The set of all $m\times n$ matrices with real entries.}$\mathbf{d}\in\mathbb{R}^{n\times1}$
such that $\mathbf{y}_{{\scriptscriptstyle ES}}=R\cdot\mathbf{d}$
. Therefore, $L\cdot\mathbf{y}(0)=L\cdot\mathbf{y}_{{\scriptscriptstyle ES}}=L\cdot\left(R\cdot\mathbf{d}\right)=\mathbf{d}$,
so that $\mathbf{y}_{{\scriptscriptstyle ES}}=R\cdot\mathbf{d}=R\cdot L\cdot\mathbf{y}(0)$,
as desired. 
\end{proof}

Once the steady state solution of the dynamics (\ref{eq:System with matrix F})
is found, the steady state solution of the SD dynamics (\ref{eq:3})
can be found using back substitution: 
\begin{equation}
\mathbf{x}_{{\scriptscriptstyle ES}}=\mathbf{y}_{{\scriptscriptstyle ES}}-Q\cdot\mathbf{s}=R\cdot L\cdot\mathbf{y}_{0}-Q\cdot\mathbf{s}=R\cdot L\cdot\mathbf{x}_{0}+\left(R\cdot L-\mathbb{I}_{n}\right)\cdot Q\cdot\mathbf{s}\,.\label{eq:SS solution}
\end{equation}

\subsubsection{\label{sub:Construction-of-matrices}Construction of the matrices
$R$ and $L$}

We next discuss the graph theoretical procedure to construct matrices
$R$ and $L$ that satisfy (\ref{eq:LR}). The general strategy is
to calculate matrix $R$ using Proposition \ref{prop:General kernel},
then construct the uniquely defined matrix $L$ that satisfies (\ref{eq:LR}).
Consider the block decomposition of matrix $F$ given in (\ref{eq:General case Decomposition}),
decompose matrices $R$ and $L$ such that\setlength\BAextraheightafterhline{5pt} \setlength\tabcolsep{0pt}
\begin{equation}
 F= \begin{blockarray}{ccc}  \scriptstyle{n-u} &\scriptstyle{u} &  \\ \begin{block}{\Left{}{(\;}c|c<{\;})c}   N&\mathbf{0}&\scriptstyle{n-u}\\  \BAhhline{--||&} B&T&\scriptstyle{u}\\ \end{block}  \end{blockarray},\qquad L=\begin{blockarray}{ccc}   \scriptstyle{n-u} & \scriptstyle{u}&  \\ \begin{block}{\Left{}{(\;}c|c<{\;})c}   X&U&\scriptstyle{k}\\ \end{block}  \end{blockarray}, \qquad R=\begin{blockarray}{cc}     \scriptstyle{k}&  \\ \begin{block}{\Left{}{(\;}c<{\;})c}   Y&\scriptstyle{n-u}\\  \BAhhline{-||&} V&\scriptstyle{u}\\ \end{block}  \end{blockarray}
\label{LR partition}
\end{equation}where $k$ is the number of tSCCs of complementary digraph $G^{\star}$,
$u$ is number of vertices that are in tSCCs $C_{r+1},\dots,C_{r+k}$
, $X$ is $k\times(n-u)$, $U$ is $k\times u$, $N$ is $(n-u)\times(n-u)$,
$B$ is $u\times(n-u)$, $T$ is $u\times u$, $Y$ is $(n-u)\times k$
and $V$ is $u\times k$. 

Consequently, the kernel elements of the matrix $F$ are constructed
using Proposition \ref{prop:General kernel}, $\ker\left\{ F\right\} =span\left\{ \bar{\boldsymbol{\rho}}^{C_{r+1}},\cdots,\bar{\boldsymbol{\rho}}^{C_{r+k}}\right\} $
using the tSCCs, $\left\{ C_{r+1},\cdots,C_{r+k}\right\} $ and thus
we have $F\cdot\bar{\boldsymbol{\rho}}^{C_{r+i}}=0,\quad i=1,\dots,k$.
Let $\hat{\boldsymbol{\rho}}^{C_{r+i}}$ be normalized version of
$\bar{\boldsymbol{\rho}}^{C_{r+i}}$ such that 
\begin{equation}
\mathbbm{1}^{T}\cdot\hat{\boldsymbol{\rho}}^{C_{r+i}}=1\,.\label{eq:normalize}
\end{equation}
Then the $i^{th}$ column of $R$ is defined as $R_{i}=\hat{\boldsymbol{\rho}}^{C_{r+i}}$
such that
\[
F\cdot R_{i}=\mathbf{0}\,.
\]
\nomenclature{$\mathcal{V}(X)$}{The set of vertice of digraph $X$}
As a result we have matrix $R$ which satisfies 
\begin{equation}
F\cdot R=\mathbf{0}\,.\label{eq:FR}
\end{equation}
In other words, for any vertex $j\not\in\mathcal{V}\left(C_{r+i}\right)$,
$\left(R_{i}\right)_{j}=0$. This in turn implies that $Y\equiv\mathbf{0}$
and $V$ is non-zero matrix corresponding to the tSCCs of complementary
digraph $G^{\star}$. 

Then we can construct the matrix $L$ using the following process.
Let $U$ be the matrix given by first transposing $V$ and then replacing
each nonzero element of $V$ by $1$. Since $\mathbbm{1}^{T}\cdot\mathcal{L}(C_{r+i})=\mathbbm{1}^{T}\cdot\mathcal{L}_{r+i}=\mathbf{0}$,
we have that $U\cdot T=\mathbf{0}$ and by  (\ref{eq:normalize})
we have that $U\cdot V=\mathbb{I}_{k}$. Let the matrix $X$ be constructed
as $X=-U\cdot B\cdot N^{-1}$. With this definition and (\ref{LR partition}),
we can see that the matrix $L$ satisfies the criteria $L\cdot F=\mathbf{0}$,
\begin{equation}
L\cdot F=\left(\begin{array}{c|c}
X & U\end{array}\right)\cdot\left(\begin{array}{c|c}
N & \mathbf{0}\\
\hline B & T
\end{array}\right)=\left(\begin{array}{c|c}
X\cdot N+U\cdot B & U\cdot T\end{array}\right)=\left(\begin{array}{c|c}
-U\cdot B\cdot N^{-1}\cdot N+U\cdot B & U\cdot T\end{array}\right)=\mathbf{0}\label{eq:LF}
\end{equation}
Moreover, by definition, (\ref{LR partition}), the matrices $L$
and $R$ also satisfy (\ref{eq:LR}) as illustrated by 
\begin{equation}
L\cdot R=\left(\begin{array}{c|c}
X & U\end{array}\right)\cdot\left(\begin{array}{c}
Y\\
\hline V
\end{array}\right)=X\cdot Y+U\cdot V=0+\mathbb{I}_{k}=\mathbb{I}_{k}\label{eq:LRI}
\end{equation}
Therefore by (\ref{eq:FR}), (\ref{eq:LF}) and (\ref{eq:LRI}) we
can conclude that constructed matrices $R$ and $L$ satisfies (\ref{eq:LR})
and so the ES of the SD dynamics (\ref{eq:3}) is given by (\ref{eq:SS solution}).

\subsection{\label{sec:Illustrarion-of-Results}Illustration of Results}

Consider the directed graph $G$ with $5$ vertices, given in Figure
\ref{fig: 3.3 Digraph G} with its complementary digraph $G^{\star}$
in Figure \ref{fig: 3.3 Complementary-digraph}.

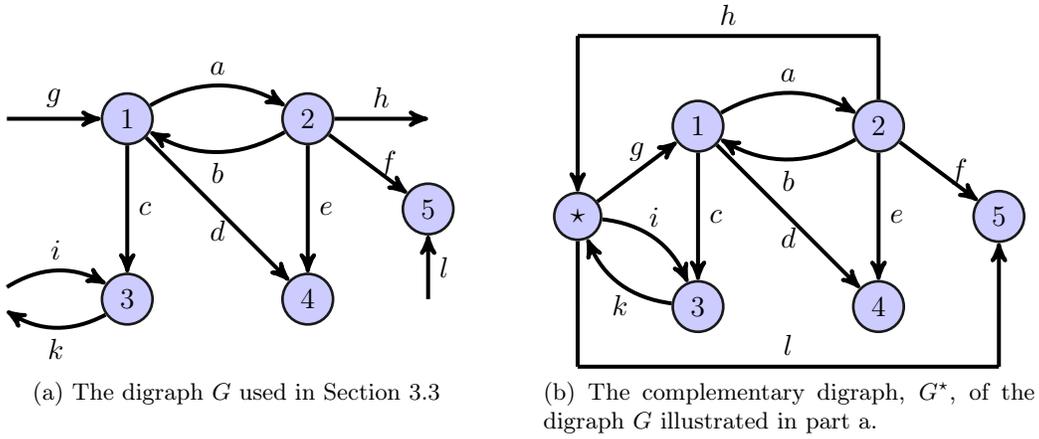
\begin{figure}
\centering\subfloat[The digraph $G$ used in Section \ref{sec:Illustrarion-of-Results}\label{fig: 3.3 Digraph G}]{\begin{tikzpicture}[scale=0.8, auto=left, every node/.style={circle,draw=black!90,fill=blue!20}, line width=1pt]  

\node (1) at (0,0) {1};   
\node (2) at (3,0)  {2};   
\node (3) at (0,-3)  {3};   
\node (4) at (3,-3)  {4};   
\node (5) at (5,-1.5)  {5};

\begin{scope}[thick, ->,>=stealth', every node/.style = {above}, line width=1.5pt]  

\draw  (1) edge [bend left] node [above]{$a$} (2);
\draw  (2) edge [bend left] node [below]{$b$} (1);
\draw  (1) edge  node [below]{$d$} (4);
\draw  (1) edge node [right]{$c$} (3); 
\draw  (2) edge  node [right]{$e$} (4);
\draw  (2) edge  node [right]{$f$} (5);
\draw  (2) edge  node [above]{$h$} (5,0);   
\path  (-2,0) edge  node [above] {$g$} (1);
\path  (-2,-2.8) edge [bend left] node [above]{$i$} (3);
\draw  (3) edge[bend left]  node [below]{$k$} (-2,-3.2); 
\path  (5,-3) edge  node [right]{$l$} (5); 
\end{scope}

\end{tikzpicture} 

}\hspace{1cm}\subfloat[The complementary digraph, $G^{\star}$, of the digraph $G$ illustrated
in part a. \label{fig: 3.3 Complementary-digraph}]{
\begin{tikzpicture}[scale=0.8, auto=left, every node/.style={circle,draw=black!90,fill=blue!20}, line width=1pt]  

\node (1) at (0,0) {1};   
\node (2) at (3,0)  {2};   
\node (3) at (0,-3)  {3};   
\node (4) at (3,-3)  {4};   
\node (5) at (5,-1.5)  {5}; 
\node (*) at (-2,-1.5)  {$\star$};  

\begin{scope}[thick, ->,>=stealth', every node/.style = {above}, line width=1.5pt]  

\draw  (1) edge [bend left] node [above]{$a$} (2);
\draw  (2) edge [bend left] node [below]{$b$} (1);
\draw  (1) edge  node [below]{$d$} (4);
\draw  (1) edge node [right]{$c$} (3); 
\draw  (2) edge  node [right]{$e$} (4);
\draw  (2) edge  node [right]{$f$} (5);
\draw[-]  (2) --(3,1.5);    
\draw[-]  (3,1.5)--(-2,1.5) node [midway, above] {$h$};  
\draw  (-2,1.5) -- (*);  
\draw  (*) edge  node [above]{$g$} (1);
\draw  (*) edge [bend left] node [above]{$i$} (3);
\draw  (3) edge[bend left]  node [below]{$k$} (*); 
\draw [-]  (*) --(-2,-4); 
\draw [-] (-2,-4)--(5,-4) node[midway, above] {$l$}; 
\draw  (5,-4)-- (5); 
\end{scope}

\end{tikzpicture}

}

\caption{A digraph with its corresponding complementary digraph. \label{fig:Illustration-of-results}}
\end{figure}

We have labeled vertices according to the labeling procedure described
in Section \ref{sec:Preliminary-results}. We note that the complementary
digraph $G^{\star}$ of digraph $G$ is not strongly connected, with
non-terminal SCC $C_{1}$ with $\mathcal{V}\left(C_{1}\right)=\{1,2,3,\star\}$
and two tSCCs, $C_{2}$ with $\mathcal{V}\left(C_{2}\right)=\left\{ 4\right\} $
and $C_{3}$ with $\mathcal{V}\left(C_{3}\right)=\left\{ 5\right\} $.
The Laplacian matrix for this digraph is 
\begin{equation}
\mathcal{L}(G)=\left(\begin{array}{ccccc}
-(a+c+d) & b & 0 & 0 & 0\\
a & -(b+e+f) & 0 & 0 & 0\\
c & 0 & 0 & 0 & 0\\
d & e & 0 & 0 & 0\\
0 & f & 0 & 0 & 0
\end{array}\right)\,.\label{eq:Laplacian of illustration}
\end{equation}

The degradation and synthesis are given by $D=diag\left(0,h,i,0,0\right)$,
$\mathbf{s}=\left(g,0,k,0,l\right)^{T},$ respectively. The associated
SD dynamics is given by a system of ODEs as in  (\ref{eq:3}). So
the matrix $F=\mathcal{L}(G)-D$ with its corresponding partitioning
is given by 
\begin{equation}
F=\left(\begin{array}{c|c}
N & 0\\
\hline B & T
\end{array}\right)=\left(\begin{array}{ccc|cc}
-(a+c+d) & b & 0 & 0 & 0\\
a & -(b+e+f+h) & 0 & 0 & 0\\
c & 0 & i & 0 & 0\\
\hline d & e & 0 & 0 & 0\\
0 & f & 0 & 0 & 0
\end{array}\right)\label{eq:illus F}
\end{equation}
Then we partition the vector $\mathbf{s}$ such that it matches with
the partitioning of $F$, so that we have 
\[
\mathbf{s}=\left(\begin{array}{c}
\mathbf{s}'\\
\hline \mathbf{s}''
\end{array}\right),\text{ where }\mathbf{s}'=\left(\begin{array}{c}
g\\
0\\
k
\end{array}\right)\text{ and }\mathbf{s}''=\left(\begin{array}{c}
0\\
l
\end{array}\right)\,.
\]
In order for an ES to exist, the vector $\mathbf{s}$ should satisfy
the necessary and sufficient conditions given in Theorem \ref{thm:BNS}.
By the first condition we have that $\mathbf{s}''\equiv\mathbf{0}$,
and thus $\boxed{l=0}$ implying that we cannot have synthesis on
vertex $5$. Next, we check the second condition of Theorem \ref{thm:BNS},
\[
\mathbf{0}=BN^{-1}\mathbf{s}'=\mathbbm{1}^{T}BN^{-1}\mathbf{s}'=\left(\begin{array}{ccc}
-\frac{a(e+f)+d(b+e+f+h)}{b(c+d)+(a+c+d)(e+f+h)} & -\frac{bd+(a+c+d)(e+f)}{b(c+d)+(a+c+d)(e+f+h)} & 0\end{array}\right)\left(\begin{array}{c}
g\\
0\\
k
\end{array}\right)\,,
\]
and conclude that $\boxed{g=0}$. Hence, the steady state solution
exists if and only if $g=l=0$. The digraphs $G$ and $G^{\star}$
which are compatible with having a steady state are depicted in Figure
\ref{fig:ES-compatible-digraph}. In a comparison with Figure \ref{fig: 3.3 Digraph G},
we note the absence of synthesis on nodes $1$ and $5$.
\begin{figure}
\centering\subfloat[ES compatible digraph $G$. Edges $g$ and $l$ were deleted.\label{fig: 3.3 Digraph G-1}]{\begin{tikzpicture}[scale=0.8, auto=left, every node/.style={circle,draw=black!90,fill=blue!20}, line width=1pt]  

\node (1) at (0,0) {1};   
\node (2) at (3,0)  {2};   
\node (3) at (0,-3)  {3};   
\node (4) at (3,-3)  {4};   
\node (5) at (5,-1.5)  {5};

\begin{scope}[thick, ->,>=stealth', every node/.style = {above}, line width=1.5pt]  

\draw  (1) edge [bend left] node [above]{$a$} (2);
\draw  (2) edge [bend left] node [below]{$b$} (1);
\draw  (1) edge  node [below]{$d$} (4);
\draw  (1) edge node [right]{$c$} (3); 
\draw  (2) edge  node [right]{$e$} (4);
\draw  (2) edge  node [right]{$f$} (5);
\draw  (2) edge  node [above]{$h$} (5,0);   
\path  (-2,-2.8) edge [bend left] node [above]{$i$} (3);
\draw  (3) edge[bend left]  node [below]{$k$} (-2,-3.2);
\end{scope}

\end{tikzpicture} 

}\hspace{1cm}\subfloat[Complementary digraph $G^{\star}$ \label{fig: 3.3 Complementary-digraph-1}]{\begin{tikzpicture}[scale=0.8, auto=left, every node/.style={circle,draw=black!90,fill=blue!20}, line width=1pt]  
\node (1) at (0,0) {1};   
\node (2) at (3,0)  {2};   
\node (3) at (0,-3)  {3};   
\node (4) at (3,-3)  {4};   
\node (5) at (5,-1.5)  {5}; 
\node (*) at (-2,-1.5)  {$\star$}; 
\begin{scope}[thick, ->,>=stealth', every node/.style = {above}, line width=1.5pt]  

\draw  (1) edge [bend left] node [above]{$a$} (2);
\draw  (2) edge [bend left] node [below]{$b$} (1);
\draw  (1) edge  node [below]{$d$} (4);
\draw  (1) edge node [right]{$c$} (3); 
\draw  (2) edge  node [right]{$e$} (4);
\draw  (2) edge  node [right]{$f$} (5);
\draw[-]  (2) --(3,1.5);   
\draw[-]  (3,1.5)--(-2,1.5) node [midway, above] {$h$}; 
\draw  (-2,1.5) -- (*);   
\draw  (*) edge [bend left] node [above]{$i$} (3);
\draw  (3) edge[bend left]  node [below]{$k$} (*); 
\end{scope}

\end{tikzpicture}

}

\caption{Steady state compatible digraph with its corresponding complementary
digraph. \label{fig:ES-compatible-digraph} The digraph $G$ illustrated
in Figure \ref{fig: 3.3 Digraph G} was tested for the conditions
of Theorem \ref{thm:BNS} and conflicting edges were deleted. }
\end{figure}
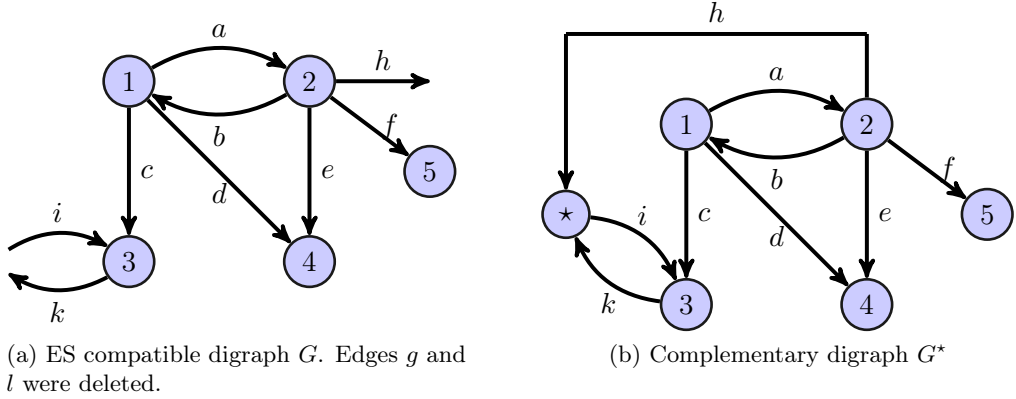

Every matrix is defined as in (\ref{eq:illus F}) except for the vector
$\mathbf{s}=(0,0,k,0,0)^{T}.$ As before $G^{\star}$ has two tSCCs,
$C_{2}$ with $\mathcal{V}\left(C_{2}\right)=\{4\}$ and $C_{3}$
with $\mathcal{V}\left(C_{3}\right)=\{5\}$. Now with the necessary
and sufficient conditions in hand, we will follow the construction
process for the matrices $R$ and $L$ described in Section \ref{sub:Construction-of-matrices}.
Since tSCCs, $C_{2}$ and $C_{3}$, each have only one vertex, we
have normalized vectors $\hat{\boldsymbol{\rho}}^{C_{2}}=(0,0,0,1,0)^{T}$
and $\hat{\boldsymbol{\rho}}^{C_{3}}=(0,0,0,0,1)$. So the columns
of the matrix $R$ are defined as $R_{1}=\hat{\boldsymbol{\rho}}^{C_{2}},\, R_{2}=\hat{\boldsymbol{\rho}}^{C_{3}}$,
\[
R=\left(\begin{array}{cc}
0 & 0\\
0 & 0\\
0 & 0\\
\hline 1 & 0\\
0 & 1
\end{array}\right)=\left(\begin{array}{c}
Y\\
\hline V
\end{array}\right)
\]
Therefore, $V=\left(\begin{array}{cc}
1 & 0\\
0 & 1
\end{array}\right)$, then transposing $V$ and writing $1$'s instead of its non-zero
elements we get $U=\left(\begin{array}{cc}
0 & 1\\
1 & 0
\end{array}\right)$. And so the matrix $X$ is 
\[
X=-U\cdot B\cdot N^{-1}=\frac{1}{b(c+d)+(a+c+d)(e+f+h)}\left(\begin{array}{ccc}
af & (a+c+d)f & 0\\
ae+d(b+e+f+h) & bd+(a+c+d)e & 0
\end{array}\right)
\]
So
\[
L=\left(\begin{array}{c|c}
X & U\end{array}\right)=\left(\begin{array}{ccc|cc}
\frac{af}{b(c+d)+(a+c+d)(e+f+h)} & \frac{(a+c+d)f}{b(c+d)+(a+c+d)(e+f+h)} & 0 & 0 & 1\\
\frac{ae+d(b+e+f+h)}{b(c+d)+(a+c+d)(e+f+h)} & \frac{bd+(a+c+d)e}{b(c+d)+(a+c+d)(e+f+h)} & 0 & 1 & 0
\end{array}\right)
\]
Accordingly, by  (\ref{eq:SS solution}) the ES, $\mathbf{x}_{{\scriptscriptstyle ES}}$,
is given by 
\[
\mathbf{x}_{{\scriptscriptstyle ES}}=R\cdot L\cdot\mathbf{x}_{0}+\left(R\cdot L-\mathbb{I}_{n}\right)\cdot F^{+}\cdot\mathbf{s}=R\cdot L\cdot\mathbf{x}_{0}+\left(\begin{array}{ccccc}
0 & 0 & \frac{k}{i} & 0 & 0\end{array}\right)^{T}\,.
\]

\section{\label{sec: negativity of inverse} Inverse of Non-Singular Perturbed\emph{
}Matrices}

In our previous paper \cite{MirzaevGunawardena2013bmb}, we have proven
that the perturbed Laplacian matrix of a strongly connected digraph
is non-singular. Nevertheless, we claimed that the inverse of such
a matrix has non-positive entries. In this section, we prove our claim
and provide a graph theoretic algorithm for the computation of the
inverse of perturbed matrices. Once again consider a digraph $G$
with $n$ nodes. As before Laplacian matrix for this digraph is given
by matrix $\mathcal{L}(G)\in\mathbb{R}^{n\times n}$ and perturbed
matrix is defined as $P=\mathcal{L}(G)-\Delta$, where $\Delta$ is
diagonal matrix with non-negative entries, 
\[
\left(\Delta\right)_{ij}=\begin{cases}
\delta_{i} & i=j\\
0 & i\not=j
\end{cases}\,.
\]
Remember from Remark \ref{lem:perturbed} that a perturbed matrix
of a strongly connected digraph is a non-singular matrix. However,
a perturbed matrix of an arbitrary digraph is not necessarily non-singular.
Here, we will prove that the inverse of any non-singular perturbed
matrix is a non-positive matrix. By that we mean all the elements
of the inverse matrix, $P^{-1}$, are non-positive real numbers (henceforth,
$P$ represents a non-singular perturbed matrix). To accomplish this
we first prove the statement for the case when the digraph $G$ is
strongly connected and then prove it for an arbitrary digraph.

\subsection{\label{sub:Strongly-connected-case inverse}Strongly connected case}

When the digraph $G$ is strongly connected we will use the explicit
formulation of the inverse of a non-singular matrix $P$, derived
from Laplace expansion of the determinant,

\begin{equation}
P^{-1}=\frac{1}{\det(P)}adj(P)\label{eq:adjugate}
\end{equation}
where $\left(adj(P)\right)_{ij}=(-1)^{i+j}P_{(ji)}$, the $ij$-th
entry of the adjugate being the $(ji)$-th minor of $P$ (up to the
sign). At this point we can reconstruct the $i$-th row and $j$-th
column of the matrix $P$ such that the constructed matrix is the
Laplacian matrix of a strongly connected digraph denoted as $G^{ij}$.
Then it follows that $\mathcal{L}(G^{ij})_{(ij)}=P_{(ij)}$. An example
illustration of such a construction is given in Figure \ref{fig:Reconstruction-of-the}.
Thus the $ij$-th minor of the matrix $P$ can be calculated as the
$ij$-th minor of the matrix $\mathcal{L}(G^{ij})$, which in turn
can be calculated using the MTT,

\begin{align}
\left(adj(P)\right)_{ji} & =(-1)^{i+j}P_{(ij)}=(-1)^{i+j}\mathcal{L}(G^{ij})_{(ij)}\nonumber \\
 & =(-1)^{i+j}(-1)^{n+i+j-1}\left(\boldsymbol{\rho}^{G^{ij}}\right)_{i}=(-1)^{n-1}\left(\boldsymbol{\rho}^{G^{ij}}\right)_{i}\,.\label{eq:constructed G}
\end{align}

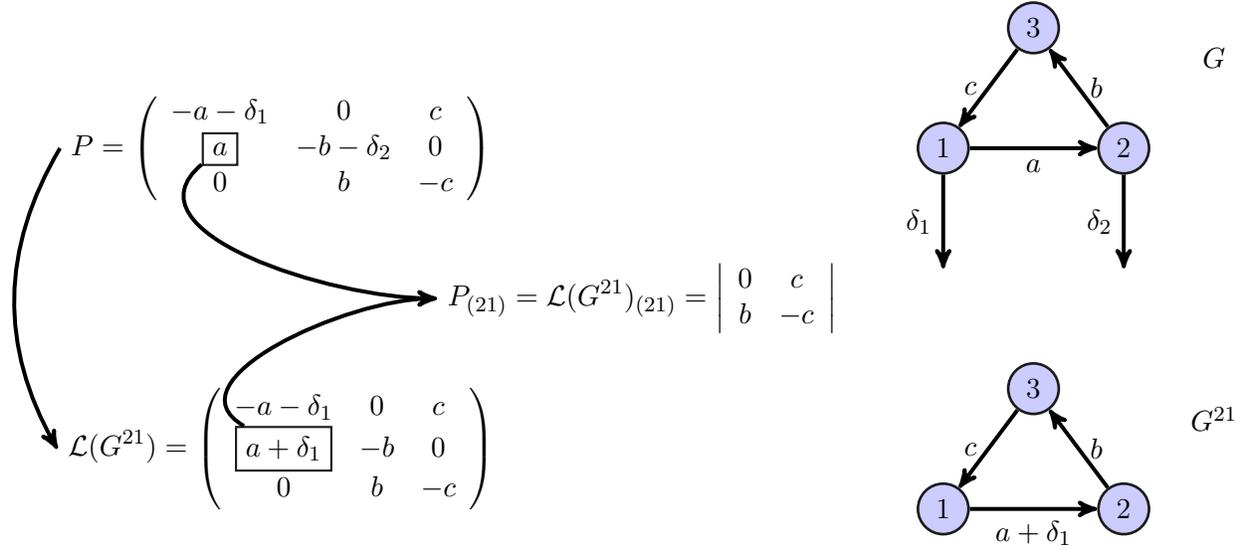
\begin{figure}
  {\tikzstyle{every picture}+=[remember picture]
 \begin{tikzpicture}[scale=0.8, auto=left, line width=1pt]  
  
\node (n1) at (-10,0) {$\displaystyle P=\left(\begin{array}{ccc} -a-\delta_{1} & 0 & c\\ \tikz[baseline]{\node[rectangle,draw=black!90,thick, anchor=base] (t1){$a$};} & -b-\delta_{2} & 0\\ 0 & b & -c \end{array}\right)$};

\node (n3) at (-10,-5) {$\displaystyle \mathcal{L}(G^{21})=\left(\begin{array}{ccc} -a-\delta_{1} & 0 & c\\ \tikz[baseline]{\node[rectangle,draw=black!90,thick, anchor=base] (t2){$a+\delta_1$};} & -b & 0\\ 0 & b & -c \end{array}\right)$};

\node (n4) at (-4,-2.5) {$\displaystyle P_{(21)}=\mathcal{L}(G^{21})_{(21)}=\left|\begin{array}{cc} 0 & c\\  b & -c \end{array}\right|$};

\node[ circle,draw=black!90,fill=blue!20] (1) at (1,0) {1};    
\node[ circle,draw=black!90,fill=blue!20] (2) at (4,0)  {2};    
\node[ circle,draw=black!90,fill=blue!20] (3) at (2.5,2)  {3};  
\node (55) at (5.5,1.5) {$G$};
\node[ circle,draw=black!90,fill=blue!20] (11) at (1,-6) {1};    
\node[ circle,draw=black!90,fill=blue!20] (22) at (4,-6)  {2};    
\node[ circle,draw=black!90,fill=blue!20] (33) at (2.5,-4)  {3};  
\node (44) at (5.5,-4.5) {$G^{21}$};   

\begin{scope}[thick, ->,>=stealth', every node/.style = {above}, line width=1.5pt]   
\draw  (1) edge  node [below]{$a$} (2); 
\draw  (2) edge  node [right]{$b$} (3); 
\draw  (3) edge  node [left][left]{$c$} (1); 
\draw  (1) edge  node [left]{$\delta_1$} (1,-2); 
\draw  (2) edge  node [left]{$\delta_2$} (4,-2);
\draw  (11) edge  node [below]{$a+\delta_1$} (22); 
\draw  (22) edge  node [right]{$b$} (33); 
\draw  (33) edge  node [left][left]{$c$} (11); 
\path[->] (t1) edge [out=220, in=180] (n4); 
\path[->] (t2) edge [out=150, in=180] (n4); 
\path[->]  (n1.west) edge [bend right] (n3.west);
\end{scope}

\end{tikzpicture}
}

\caption{$2$nd row and $1$st row of $P$ is reconstructed to get the Laplacian
matrix $\mathcal{L}(G^{21})$ and the associated strongly connected
digraph $G^{21}$. \emph{At the top (from left to right), }perturbed
matrix $P$ and the associated digraph $G$. \emph{In the middle,
}$\left(21\right)$-th minor of $P$ and $(21)$-th minor of $\mathcal{L}(G^{21})$.
\emph{At the bottom (from left to right), }Laplacian matrix $\mathcal{L}(G^{21})$
and the associated digraph $G^{21}$.\label{fig:Reconstruction-of-the} }
\end{figure}

Since the digraph $G^{ij}$ is strongly connected, from Proposition
\ref{prop:SCC kernel} we find that $(\boldsymbol{\rho}^{G^{ij}})_{i}>0$
for each index $i$. As for the determinant of the matrix $P$, we
can again add one row and one column to the matrix $P$ such that
the constructed $(n+1)\times(n+1)$ matrix is the Laplacian matrix
of strongly connected digraph $G^{\, n+1\, n+1}$. For the convenience
of the notation, hereafter, we refer to the digraph $G^{\, n+1\, n+1}$
simply as $G^{\, n+1}$. 

Then by\emph{ }the MTT it is implied that

\begin{equation}
\det(P)=\mathcal{L}(G^{\, n+1})_{(n+1\, n+1)}=(-1)^{3(n+1)-1}\left(\boldsymbol{\rho}^{G^{\, n+1}}\right)_{n+1}=(-1)^{n}\left(\boldsymbol{\rho}^{G^{\, n+1}}\right)_{n+1}\,.\label{eq:determinant}
\end{equation}

Note that the construction of such a row and a column is independent
of the existing rows, so we can always reconstruct a strongly connected
digraph $G^{\, n+1}$. Consequently, from Proposition \ref{prop:SCC kernel}
it follows that $(\boldsymbol{\rho}^{G^{n+1}})_{n+1}>0$. Therefore,\emph{
}(\ref{eq:adjugate})\emph{, }(\ref{eq:constructed G}) and (\ref{eq:determinant})
together imply that 

\[
(P^{-1})_{ij}=\frac{1}{det(P)}\left(adj(P)\right)_{ij}=\frac{(-1)^{n-1}\left(\boldsymbol{\rho}^{G^{ij}}\right)_{i}}{(-1)^{n}\left(\boldsymbol{\rho}^{G^{n+1}}\right)_{n+1}}=-\frac{\left(\boldsymbol{\rho}^{G^{ij}}\right)_{i}}{\left(\boldsymbol{\rho}^{G^{n+1}}\right)_{n+1}}<0\,,
\]
and thus all the entries of the matrix $P^{-1}$ are strictly less
than zero.

\subsection{General Case}

In the general case, the perturbed matrix of an arbitrary digraph
$G$ may not be a non-singular matrix. In Section \ref{sec:Theoretical-Development}
we have seen that for a perturbed matrix to be non-singular, the diagonal
blocks associated to each SCC should be a perturbed matrix on their
own. Consider an arbitrary digraph $G$ with $q$ SCCs, $C_{1},\cdots,C_{q}$,
we assume that the matrix $P$ can be partitioned analogous to (\ref{eq:partition}), 

\[
P=\left(\begin{array}{ccc}
\boxed{\mathcal{P}_{1}} & \cdots & \mathbf{0}\\
\vdots & \ddots & \vdots\\
+ & + & \boxed{\mathcal{P}_{q}}
\end{array}\right)\,,
\]
where each $\mathcal{P}_{i}=\mathcal{L}(C_{i})-\Delta_{i}$ is an
$a_{i}\times a_{i}$ perturbed matrix of SCC $C_{i}$. Having this
decomposition in hand, we are ready to prove that the inverse of the
matrix $P$ has all non-positive entries. To do that we will use the
results of Section \ref{sub:Strongly-connected-case inverse} and
follow the standard path for finding the inverse using the method
of Gaussian elimination,\begin{align*}
(\begin{array}{c:c}P & \mathbb{I}_{n}\end{array})=
&\left(\begin{array}{ccc:ccc}\boxed{\mathcal{P}_{1}} & \cdots & \mathbf{0} & \boxed{\mathbb{I}_{a_{1}}} & \cdots & \mathbf{0}\\\vdots & \ddots & \vdots & \vdots & \ddots & \vdots\\+ & + & \boxed{\mathcal{P}_{q}} & \mathbf{0} & \mathbf{0} & \boxed{\mathbb{I}_{a_{q}}}\end{array}\right)\\
\longrightarrow&\left(\begin{array}{ccc:ccc}\boxed{\mathbb{I}_{a_{1}}} & \cdots & \mathbf{0} & \boxed{\mathcal{P}_{1}^{-1}} & \cdots & \mathbf{0}\\\vdots & \ddots & \vdots & \vdots & \ddots & \vdots\\- & - & \boxed{\mathbb{I}_{a_{q}}} & \mathbf{0} & \mathbf{0} & \boxed{\mathcal{P}_{q}^{-1}}\end{array}\right)
\end{align*}

where minus sign, $-$, stands for some matrix with non-positive real
entries. From Section \ref{sub:Strongly-connected-case inverse} each
$\mathcal{P}_{i}^{-1}$ has negative entries. Hence multiplying a
block of rows having non-negative real numbers by $\mathcal{P}_{i}^{-1}$
transforms elements of those rows into non-positive real numbers. 

Consider the $ij$-the entry, $-a_{ij}$, of the LHS. Note that $jj$-th
entry of LHS is equal to $1$, e.g. $a_{jj}=1$. Then in order to
eliminate this negative element on the LHS of the augmented matrix
(i.e., $-a_{ij}$ ) we have to multiply the $j$-th row by a positive
real number, $a_{ij}$, and add resulting row to the $i$-th row.
Since all the elements of RHS are non-positive, this operation places
non-positive real numbers on $i$-th row of RHS. Performing operations
consecutively, on columns $1,2,\dots,n$ lead to the following matrix 

\begin{equation}
\longrightarrow\left(\begin{array}{ccc:ccc}\boxed{\mathbb{I}_{a_{1}}} & \cdots & \mathbf{0} & \boxed{\mathcal{P}_{1}^{-1}} & \cdots & \mathbf{0}\\\vdots & \ddots & \vdots & \vdots & \ddots & \vdots\\\mathbf{0} & \mathbf{0} & \boxed{\mathbb{I}_{a_{q}}} & - & - & \boxed{\mathcal{P}_{q}^{-1}}\end{array}\right)=\left(\begin{array}{c:c}\mathbb{I}_{n} & P^{-1}\end{array}\right)
\label{eq:general inverse}
\end{equation}As it can be observed from the above equation, (\ref{eq:general inverse}),
all the entries of the matrix $P^{-1}$ are non-positive real numbers,
as desired. 

Note that the inverse matrix $P^{-1}$ preserves lower-block diagonal
structure of the perturbed matrix $P$. Moreover, recall that the
matrix $N$ defined in (\ref{eq: Definition of N}) is also a non-singular
perturbed matrix. Hence, all the entries of the inverse matrix $N^{-1}$
are non-positive real numbers.

\subsection{\label{sec:Computation-of inverse of N}Symbolic computation of $P^{-1}$
based on the Matrix-Tree Theorem}

For a given set of constant edge weights numerical computation of
the inverse matrix $P^{-1}$ is challenging, and even more challenging
is symbolic computation of the inverse. Therefore, here we provide
a graph theoretic algorithm for the symbolic computation of $P^{-1}$.
The algorithm is again based on the MTT, and utilizes the theory developed
in Section \ref{sub:SC framework}. 

We start by introducing strictly positive synthesis edges at each
vertex, i.e. $\mathbf{s}=(s_{1},\dots,s_{n})^{T}\in\mathbb{R}_{>0}^{n\times1}$.
This makes the complementary digraph $G^{\star}$ strongly connected,
since the vertex $\star$ can be reached from any other vertex and
any vertex can be reached from $\star$.\textbf{ }After applying the
framework of the strongly connected case (Section \ref{sub:SC framework},
(\ref{eq: SC steady state MTT})) for the symbolic synthesis vector,
$\mathbf{s}\in\mathbb{R}_{>0}^{n\times1}$, we get the graph theoretic
representation of the ES, $\mathbf{p}$, i.e. 
\begin{equation}
\left(\mathbf{p}\right)_{i}=\frac{\left(\boldsymbol{\rho}^{G^{\star}}\right)_{i}}{\left(\boldsymbol{\rho}^{G^{\star}}\right)_{\star}}\,.\label{eq:ES for P}
\end{equation}
Consider the standard basis of $\mathbb{R}^{n}$
\[
\left\{ \mathbf{e}^{(i)}=(0,\dots,0,1,0,\dots,0)\right\} _{i=1}^{n}\,.
\]
Then, multiplying $P^{-1}$ by the vector $\mathbf{e}^{(i)}$ yields
the $i$-th column of the inverse matrix $P^{-1}$. Thus we construct
$P^{-1}$ by constructing its one column at a time. For that consider
the following combinations of specific synthesis edges,

\begin{equation}
\mathbf{s}^{(0)}=(1,\dots,1)^{T},\ ,\mathbf{s}^{(i)}=(1,\dots,1,2,1,\dots,1)^{T}\quad i=1,\dots,n\label{eq:Specific synthesis}
\end{equation}
where only $2$ is the $i$-th entry of the vector $\mathbf{s}^{(i)}$.
One can then easily observe that 
\[
\left\{ \mathbf{s}^{(i)}-\mathbf{s}^{(0)}=\mathbf{e}^{(i)}=(0,\dots,0,1,0,\dots,0)^{T}\ i=1,\dots,n\right\} 
\]
is standard basis for \textbf{$\mathbb{R}^{n}$}. On the other hand
substituting the vectors $\left\{ \mathbf{s}^{(0)},\mathbf{s}^{(1)},\dots,\mathbf{s}^{(n)}\right\} $
into (\ref{eq:ES for P}) gives rise to the steady states $\left\{ \mathbf{p}^{(0)},\mathbf{p}^{(1)},\dots,\mathbf{p}^{(n)}\right\} $,
respectively.%
\footnote{Note that we cannot substitute $\mathbf{e}^{(i)}$ directly to (\ref{eq:ES for P}),
because this would make $G^{\star}$ not strongly connected.%
} Then again these steady states can be computed algebraically using
\emph{ }(\ref{eq:4}),
\[
-P^{-1}\cdot\mathbf{s}^{(i)}=\mathbf{p}^{(i)}\qquad\forall i=0,1,\dots,n.
\]
This in turn can be simplified to 
\[
P^{-1}\cdot\left(\mathbf{s}^{(i)}-\mathbf{s}^{(0)}\right)=P^{-1}\cdot\mathbf{e}^{(i)}=\mathbf{p}^{(0)}-\mathbf{p}^{(i)}\,.
\]
Since $\left\{ \mathbf{e}^{(1)},\dots,\mathbf{e}^{(n)}\right\} $
is a standard basis for $\mathbb{R}^{n\times1}$, $i$-th column of
the matrix $P^{-1}$ is given by the vector $\mathbf{p}^{(0)}-\mathbf{p}^{(i)}$
or simply as 
\begin{equation}
P^{-1}=\left(\begin{array}{c|c|c|c}
\mathbf{p}^{(0)} & \mathbf{p}^{(1)} & \mathbf{\cdots} & \mathbf{p}^{(n)}\end{array}\right)\cdot\left(\begin{array}{cccc}
1 & 1 & \cdots & 1\\
-1 & 0 & \cdots & 0\\
0 & -1 & \ddots & \vdots\\
\vdots & 0 & \ddots & 0\\
0 & \vdots & \cdots & -1
\end{array}\right)\label{eq:Symbolic inverse}
\end{equation}
Note that since spanning trees rooted at vertex $\star$ cannot contain
any outgoing edge from vertex $\star$, the synthesis edges $\left\{ s_{1},\dots,s_{n}\right\} $
will not contribute to $\left(\boldsymbol{\rho}^{G^{\star}}\right)_{\star}$.
Hence, $\left(\boldsymbol{\rho}^{G^{\star}}\right)_{\star}$ remains
same for each substitution of $\mathbf{s}^{(i)}$. Consequently, we
don't have to calculate $\left(\boldsymbol{\rho}^{G^{\star}}\right)_{\star}$
each time and divide the other terms by it, we can just factor out
and perform the division at the end. 

We will illustrate the algorithm presented in this subsection with
a simple example. Consider the perturbed matrix 
\[
P=\left(\begin{array}{cc}
-a & 0\\
a & -b
\end{array}\right)
\]
which is simple a $2\times2$ matrix whose inverse is 
\[
P^{-1}=\frac{1}{ab}\left(\begin{array}{cc}
-b & 0\\
-a & -a
\end{array}\right)\,.
\]
On the other hand, by (\ref{eq:ES for P}) the symbolic ES is given
by 
\begin{equation}
\mathbf{p}=\frac{1}{ab}\left(\begin{array}{c}
s_{1}b\\
s_{2}a+s_{1}a
\end{array}\right)\,.\label{eq:illus P inverse}
\end{equation}
Then, substituting the synthesis vectors defined in (\ref{eq:Specific synthesis})
into (\ref{eq:illus P inverse}) we find that 
\[
\mathbf{p}^{(0)}=\frac{1}{ab}\left(\begin{array}{c}
b\\
2a
\end{array}\right),\ \mathbf{p}^{(1)}=\frac{1}{ab}\left(\begin{array}{c}
2b\\
3a
\end{array}\right),\ \mathbf{p}^{(2)}=\frac{1}{ab}\left(\begin{array}{c}
b\\
3a
\end{array}\right)\,.
\]
Thus the inverse matrix $P^{-1}$ is given by (\ref{eq:Symbolic inverse}),
\[
P^{-1}=\left(\begin{array}{c|c|c}
\mathbf{p}^{(0)} & \mathbf{p}^{(1)} & \mathbf{p}^{(3)}\end{array}\right)\cdot\left(\begin{array}{cc}
1 & 1\\
-1 & 0\\
0 & -1
\end{array}\right)=\frac{1}{ab}\left(\begin{array}{ccc}
b & 2b & b\\
2a & 3a & 3a
\end{array}\right)\cdot\left(\begin{array}{cc}
1 & 1\\
-1 & 0\\
0 & -1
\end{array}\right)=\frac{1}{ab}\left(\begin{array}{cc}
-b & 0\\
-a & -a
\end{array}\right)\,.
\]

\section{\label{sec:Biochemical-Network-Application}Biochemical Network Application}

In this section we will describe how the above developed framework
is useful for symbolic computation of the steady state solutions of
biochemical reaction networks.

\subsection{Secretion of insulin granules in $\beta$-cells}

One of the most prevalent diseases, diabetes mellitus (or simply diabetes)
is characterized by high level of blood glucose. Diabetes results
from either pancreas does not release enough insulin, or cells do
not respond to insulin produced with increased consumption of sugar,
or combination of both \cite{Barg2003}. Insulin is blood glucose-lowering
hormone produced, processed and stored in secretory granules by pancreatic
$\beta$-cells in Langerhans islets \cite{Olofsson2002}. Consequently,
secretory granules are released to extracellular space, which is regulated
by Ca\textsuperscript{2+}- dependent exocytosis \cite{Wollheim1981}.
Since diabetes is related to secretional malfunctions \cite{Rorsman2003},
studying mechanism of both normal and pathological insulin release
in molecular level is crucial for understanding of disease process.

\begin{figure}[t]
\centering\includegraphics[scale=0.6]{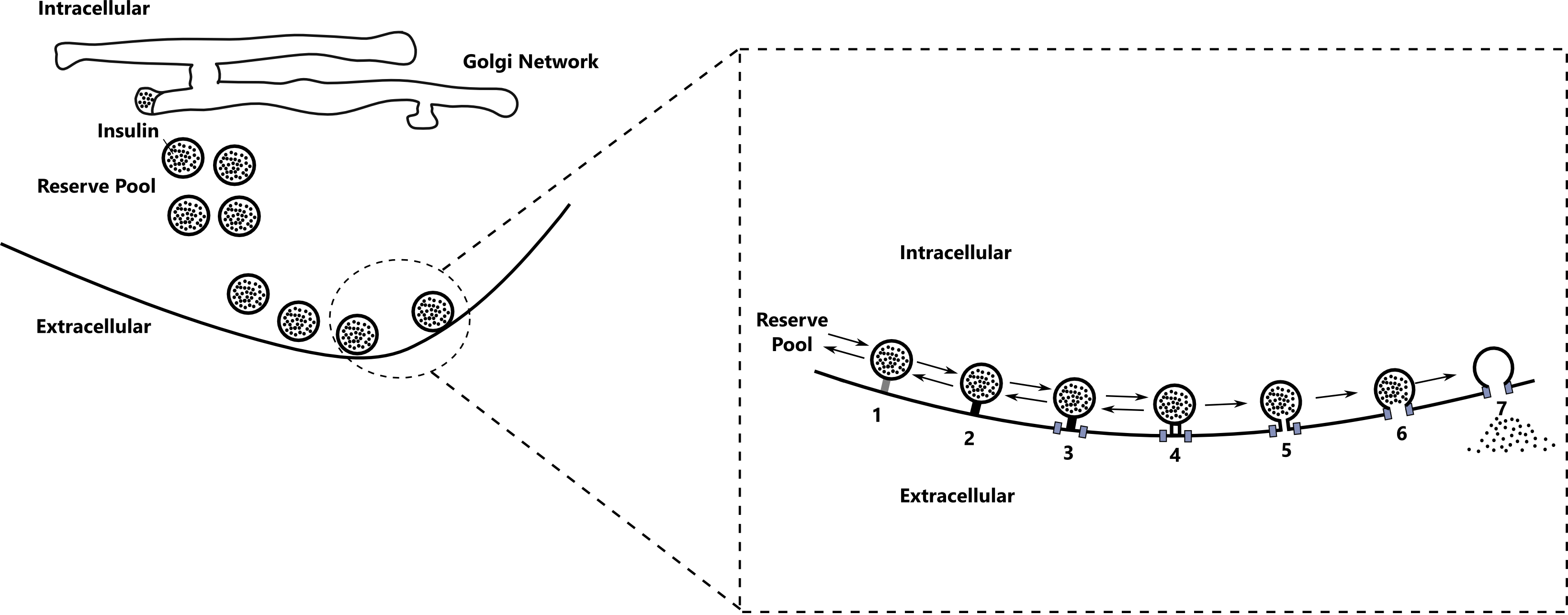}

\caption{Schematic drawing of exocytosis cascade in \textbf{$\beta$}-cells.
\emph{On the left, }insulin granules produced in Golgi network is
transported into extracellular space through exocytosis. \emph{On
the right, }particular steps involved in the exocytosis of the insulin
granules. The numbers stand for: 1) Re-supply 2) Priming 3) Domain
Binding 4) Ca Triggering 5) Fusion 6) Pore Expansion 7) Insulin Release
\label{fig:scheme}}
\end{figure}

Chen et al. \cite{Chen2008} developed mathematical model of $\beta$-cell
to calculate both rate of granule fusion and the rate of insulin secretion
in $\beta$-cells stimulated with electrical potential. The model
is based on five-state kinetic model of granule fusion proposed by
Voets et al. \cite{Voets1999}. Figure \ref{fig:scheme} illustrates
kinetic scheme proposed for exocytosis in pancreatic $\beta$-cells.
As it is shown in the figure the model accounts for steps involved
in exocytosis cascade such as re-suply, priming, domain binding, Ca\textsuperscript{2+}
triggering, fusion, pore expansion and insulin release. It is assumed
that L-type (not the R-type) voltage-sensitive Ca\textsuperscript{2+}-channels
are used for secretion of primed granules through cell membrane. During
this process ``microdomains'' with high Ca\textsuperscript{2+}
concentration are formed at the inner mouth of L-type channels (illustrated
as circles in Figure \ref{fig:scheme}). Concentration of Ca\textsuperscript{2+}
in cytosol and microdomain at time $t$ are denoted by $C_{i}(t)$
and $C_{md}(t)$, respectively. Since the number of granules are far
less than number of Ca\textsuperscript{2+}, it is also assumed that
dynamics of Ca\textsuperscript{2+} is independent of exocytosis cascade.
For further details we refer the reader to the original paper \cite{Chen2008}. 

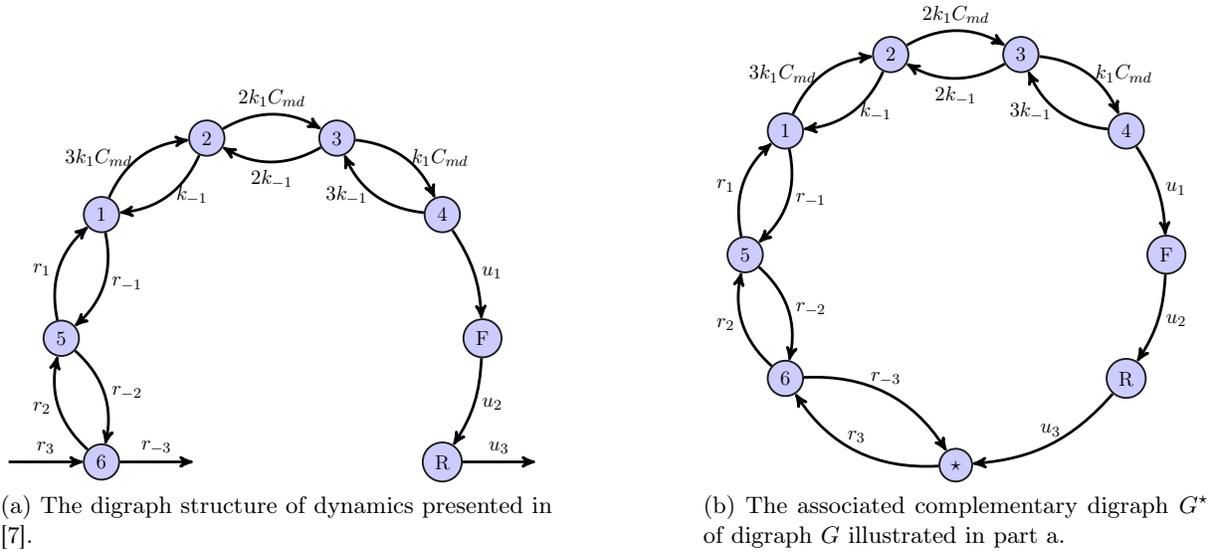
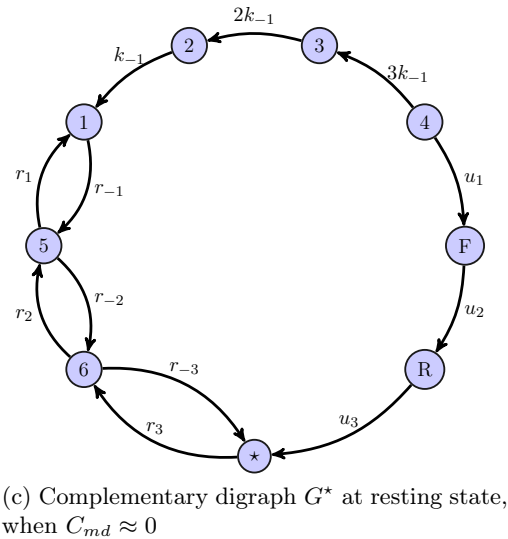
\begin{figure}[t]
\centering \subfloat[The digraph structure of dynamics presented in \cite{Chen2008}.\label{fig:Digraph Exocytosis}]{\scalebox{.7}{
\begin{tikzpicture}[scale=1,auto=left,every node/.style={circle,draw=black!90,fill=blue!20},   line width=1pt]  

\node (F) at (0:4) {F}; 
\node (4) at (1*360/10:4) {4}; 
\node (3) at (2*360/10:4) {3};
\node (2) at (3*360/10:4) {2};  
\node (1) at (4*360/10:4) {1};  
\node (5) at (5*360/10:4) {5}; 
\node (6) at (6*360/10:4) {6}; 
\node (R) at (9*360/10:4) {R};
\begin{scope}[thick,->,>=stealth', every node/.style = {above}, line width=1.5pt]                             \draw  (6) edge [bend left] node [left]{$r_2$} (5);    
\path (-5,-2.35) edge  node {$r_{3}$} (6);    
\draw (6) edge  node  {$r_{-3}$} (-1.5,-2.35);    
\draw  (5) edge [bend left] node [right]  {$r_{-2}$}(6) ;         
\draw  (5) edge [bend left] node [left]{$r_{1}$} (1);    
\draw  (1) edge [bend left] node [right] {$r_{-1}$} (5);         
\draw  (1) edge [bend left] node [left]{$3k_{1}C_{md}$}(2);     
\draw  (2) edge [bend left] node [right]{$k_{-1}$}(1);               
\draw  (2) edge [bend left] node [above]{$2k_{1}C_{md}$}(3);     
\draw  (3) edge  [bend left]node [below]{$2k_{-1}$}(2);               
\draw  (3) edge [bend left] node [right]{$k_{1}C_{md}$}(4);     
\draw  (4) edge  [bend left] node [left]{$3k_{-1}$}(3);        
\draw  (4) edge  [bend left=15]node [right] {$u_1$}(F);    
\draw  (F) edge [bend left=15]node [right]{$u_2$}(R) ;    
\draw (R) edge node [above]{$u_3$} (5,-2.35);   

\end{scope}
\end{tikzpicture}
}

}\hspace*{2cm}\subfloat[The associated complementary digraph $G^{\star}$ of digraph $G$
illustrated in part a. \label{fig:Complementary-digraph Exocytosis}]{\scalebox{.7}{
\begin{tikzpicture}[scale=1,auto=left,every node/.style={circle,draw=black!90,fill=blue!20},   line width=1pt]  

\node (F) at (0:4) {F}; 
\node (4) at (1*360/10:4) {4}; 
\node (3) at (2*360/10:4) {3};
\node (2) at (3*360/10:4) {2};  
\node (1) at (4*360/10:4) {1};  
\node (5) at (5*360/10:4) {5}; 
\node (6) at (6*360/10:4) {6}; 
\node (*) at (270:4) {$\star$};
\node (R) at (9*360/10:4) {R};
\begin{scope}[thick,->,>=stealth', every node/.style = {above}, line width=1.5pt]                             \draw  (6) edge [bend left] node [left]{$r_2$} (5);    
\path (*) edge [bend left] node {$r_{3}$} (6);    
\draw (6) edge [bend  left] node  {$r_{-3}$} (*);    
\draw  (5) edge [bend left] node [right]  {$r_{-2}$}(6) ;         
\draw  (5) edge [bend left] node [left]{$r_{1}$} (1);    
\draw  (1) edge [bend left] node [right] {$r_{-1}$} (5);         
\draw  (1) edge [bend left] node [left]{$3k_{1}C_{md}$}(2);     
\draw  (2) edge [bend left] node [right]{$k_{-1}$}(1);               
\draw  (2) edge [bend left] node [above]{$2k_{1}C_{md}$}(3);     
\draw  (3) edge  [bend left]node [below]{$2k_{-1}$}(2);               
\draw  (3) edge [bend left] node [right]{$k_{1}C_{md}$}(4);     
\draw  (4) edge  [bend left] node [left]{$3k_{-1}$}(3);        
\draw  (4) edge [bend left=15] node [right] {$u_1$}(F);    
\draw  (F) edge [bend left=15]node [right]{$u_2$}(R) ;    
\draw (R) edge [bend left=22] node [above]{$u_3$} (*);   

\end{scope}
\end{tikzpicture}
}

}

\subfloat[Complementary digraph $G^{\star}$ at resting state, when $C_{md}\approx0$\label{fig:Resting state}]{\scalebox{.7}{
\begin{tikzpicture}[scale=1,auto=left,every node/.style={circle,draw=black!90,fill=blue!20},   line width=1pt]  

\node (F) at (0:4) {F}; 
\node (4) at (1*360/10:4) {4}; 
\node (3) at (2*360/10:4) {3};
\node (2) at (3*360/10:4) {2};  
\node (1) at (4*360/10:4) {1};  
\node (5) at (5*360/10:4) {5}; 
\node (6) at (6*360/10:4) {6}; 
\node (*) at (270:4) {$\star$};
\node (R) at (9*360/10:4) {R};
\begin{scope}[thick,->,>=stealth', every node/.style = {above}, line width=1.5pt]                             \draw  (6) edge [bend left] node [left]{$r_2$} (5);    
\path (*) edge [bend left] node {$r_{3}$} (6);    
\draw (6) edge [bend  left] node  {$r_{-3}$} (*);    
\draw  (5) edge [bend left] node [right]  {$r_{-2}$}(6) ;         
\draw  (5) edge [bend left] node [left]{$r_{1}$} (1);    
\draw  (1) edge [bend left] node [right] {$r_{-1}$} (5);       
\draw  (2) edge [bend right=15] node {$k_{-1}$}(1);     
\draw  (3) edge  [bend right=15]node {$2k_{-1}$}(2);        
\draw  (4) edge  [bend right=15] node [right]{$3k_{-1}$}(3);        
\draw  (4) edge [bend left=15] node [right] {$u_1$}(F);    
\draw  (F) edge [bend left=15]node [right]{$u_2$}(R) ;    
\draw (R) edge [bend left=22] node [above]{$u_3$} (*);   

\end{scope}
\end{tikzpicture}
}

}

\caption{Exocytosis cascade of insulin granules in pancreatic $\beta$-cells\label{fig:Exocytosis-cascade} }
\end{figure}

Figure \ref{fig:Digraph Exocytosis} illustrates the dynamics associated
with exocytosis cascade as a digraph $G$. Since complementary digraph
of $G$ (Figure \ref{fig:Complementary-digraph Exocytosis}) is strongly
connected, the steady state solutions of dynamics can be calculated
using the algorithm described in Section \ref{sub:SC framework}.
The ES is given as 

\[
N_{{\scriptscriptstyle ES}}=\Delta\left(\begin{array}{c}
6k_{-1}^{3}+2u_{1}k_{-1}^{2}+C_{\text{md}}k_{1}u_{1}k_{-1}+2C_{\text{md}}^{2}k_{1}^{2}u_{1}\\
18C_{\text{md}}k_{1}k_{-1}^{2}+6C_{\text{md}}k_{1}u_{1}k_{-1}+3C_{\text{md}}^{2}k_{1}^{2}u_{1}\\
18C_{\text{md}}^{2}k_{-1}k_{1}^{2}+6C_{\text{md}}^{2}u_{1}k_{1}^{2}\\
6C_{\text{md}}^{3}k_{1}^{3}\\
\frac{6r_{-1}k_{-1}^{3}}{r_{1}}+\frac{2r_{-1}u_{1}k_{-1}^{2}}{r_{1}}+\frac{C_{\text{md}}k_{1}r_{-1}u_{1}k_{-1}}{r_{1}}+\frac{6C_{\text{md}}^{3}k_{1}^{3}u_{1}}{r_{1}}+\frac{2C_{\text{md}}^{2}k_{1}^{2}r_{-1}u_{1}}{r_{1}}\\
\frac{6r_{-2}r_{-1}k_{-1}^{3}}{r_{1}r_{2}}+\frac{2r_{-2}r_{-1}u_{1}k_{-1}^{2}}{r_{1}r_{2}}+\frac{C_{\text{md}}k_{1}r_{-2}r_{-1}u_{1}k_{-1}}{r_{1}r_{2}}+\frac{6C_{\text{md}}^{3}k_{1}^{3}u_{1}}{r_{2}}+\frac{6C_{\text{md}}^{3}k_{1}^{3}r_{-2}u_{1}}{r_{1}r_{2}}+\frac{2C_{\text{md}}^{2}k_{1}^{2}r_{-2}r_{-1}u_{1}}{r_{1}r_{2}}\\
\frac{6C_{\text{md}}^{3}k_{1}^{3}u_{1}}{u_{2}}\\
\frac{6C_{\text{md}}^{3}k_{1}^{3}u_{1}}{u_{3}}
\end{array}\right)
\]

where $\Delta$ is given as follows
\[
\begin{array}{c}
\Delta=\frac{r_{1}r_{2}r_{3}}{r_{-1}r_{-2}r_{-3}\left(\frac{6k_{1}^{3}r_{1}u_{1}C_{\text{md}}^{3}}{r_{-2}r_{-1}}+\frac{6k_{1}^{3}r_{1}r_{2}u_{1}C_{\text{md}}^{3}}{r_{-3}r_{-2}r_{-1}}+\frac{6k_{1}^{3}u_{1}C_{\text{md}}^{3}}{r_{-1}}+k_{1}k_{-1}u_{1}C_{\text{md}}+2k_{1}^{2}u_{1}C_{\text{md}}^{2}+2k_{-1}^{2}u_{1}+6k_{-1}^{3}\right)}\end{array}
\]

As we can see the ES gets complicated for the large graphs. However,
our framework provides steady state value of any given substrate (see
(\ref{eq: SC steady state MTT})), which is not easily found by numerical
simulations. 

At the resting state (electric potential set to $V=-70\, mV$ ), concentration
of Ca\textsuperscript{2+} in the microdomain is very low, so it is
assumed that $C_{md}=C_{md}(t)\approx0$. In this case, the complementary
digraph of $G$ is no longer strongly connected, which is given in
Figure \ref{fig:Resting state}. Then the ES solution have to be computed
by the process described in Section \ref{sub:Construction-of-matrices},
and is given as 
\[
N_{r,{\scriptscriptstyle ES}}=\left(\begin{array}{c}
\frac{r_{1}r_{2}r_{3}}{r_{-3}r_{-2}r_{-1}}\\
0\\
0\\
0\\
\frac{r_{2}r_{3}}{r_{-3}r_{-2}}\\
\frac{r_{3}}{r_{-3}}\\
0\\
0
\end{array}\right)
\]

In the above example the dynamics were essentially linear. Although
the framework in this paper is linear in nature, it can be applied
to nonlinear systems as well. This can be done by incorporating nonlinearity
into the framework through the edge labels. So far we treated edge
weights as uninterpreted symbols. In fact, edge weights can be an
arbitrary positive rational expressions. For example, the Michaelis-Menten
formula used in enzyme kinetics is a legitimate edge weight 
\[
a=\frac{V_{max}[S]}{K_{m}+[S]}\,,
\]
where $[S]$ stands for the concentration of the substrate $S$, $K_{m}$
and $V_{max}$ are reaction specific constants. However, in most cases
a chemical reaction network modeled with mass action kinetics, which
gives rise to a nonlinear system of ODEs. The steady states of this
type of dynamics can also be algorithmically computed using our framework.
For instance, a chemical reaction of type 
\[
A+B\overset{k}{\longrightarrow}C
\]
can be represented in our way as 
\[
A\overset{kB}{\longrightarrow}C
\]
One can then use above formalism to transform chemical reactions into
a digraph with time dependent edge weights. Consequently, this digraph
can be used to calculate the steady states using our framework. One
should keep in mind that in a such transformation only the equilibrium
solutions coincide not the transient dynamics \cite{Gunawardena2012}.
For more extensive discussion of the topic we refer the reader to
\cite{Gunawardena2012} and \cite{Gunawardena2014}. Next we illustrate
such incorporation by applying it to a nonlinear biochemical network.

\subsection{Glucose metabolism in $\beta$-cells}

In their paper Sweet and Matschinsky \cite{Sweet1995} setup a mathematical
model of pancreatic $\beta$-cell glucose metabolism to investigate
the relation between glucose and the rate of glycolysis (see Figure
\ref{fig:Schematic-glycolysis}). Since glucose metabolism in $\beta$-cells
indirectly affects the rate of insulin secretion \cite{Pedersen2009},
this type of models have implications for the diabetes treatment.
All reactions together make dynamics overwhelmingly complex. To avoid
this authors assumed that reactions inside dashed rectangles (pools)
are operating sufficiently fast, and have reached thermodynamic equilibrium.
Then ordinary differential equations are written for the rates of
transfers between these pools. Consequently, equilibrium metabolites
in a pool are calculated algebraically using equilibrium assumptions.
Although the model is minimalistic, it still includes parameters describing
overall behavior glycolysis in $\beta$-cell. Nonlinear dynamics associated
with the model can be given as in figure \ref{fig:Glycolysis-in-pancreatic}A.
In this case nonlinearity is hidden in label $f$, 
\[
f=\frac{G_{PK}D\sqrt{b^{2}-b-\frac{8K_{TPI}}{V_{c}}{\scriptstyle GIP(t)}}}{{\scriptstyle GIP(t)}}
\]
where ${\scriptstyle GIP(t)}$ stands for concentration of ${\scriptstyle GIP}$
at time $t$, and every other letter are reaction rate constants.
Since complementary digraph of $G$ (Figure \ref{fig:Glycolysis-in-pancreatic}b)
is strongly connected, the ES solutions to the system can be obtained
by the procedure described in Section \ref{sub:SC framework}:
\[
\left(\begin{array}{c}
Glu^{*}\\
H-6-P^{*}\\
\frac{1}{2}GIP^{*}\\
Pyr^{*}
\end{array}\right)=\frac{1}{bdh+beh+ceh}\left(\begin{array}{c}
adh+aeh\\
ach\\
\frac{aceh}{f}\\
ace+bdg+beg+ceg
\end{array}\right)
\]

\begin{figure}[t]
\centering\includegraphics[scale=0.4]{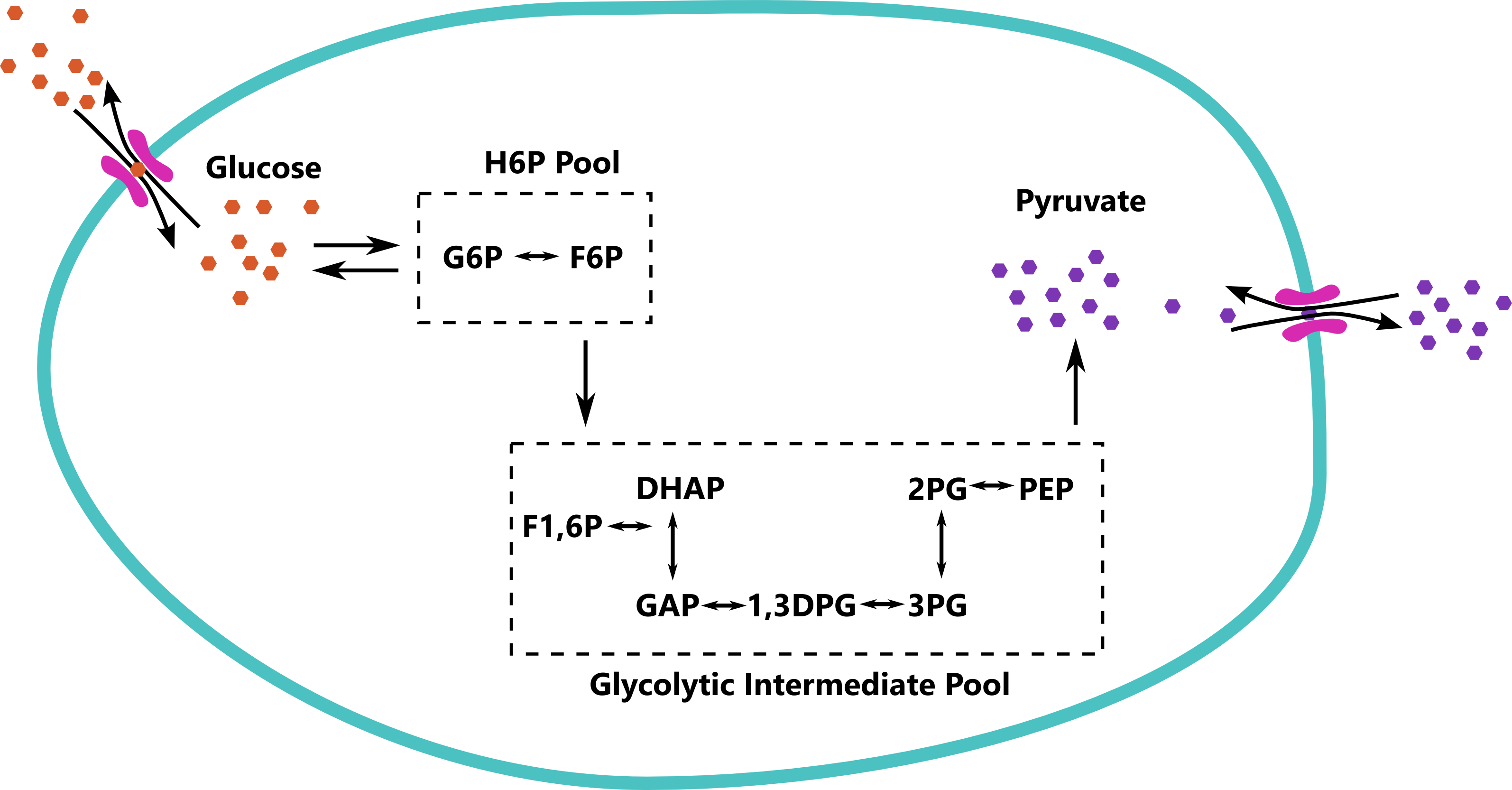}

\caption{Schematic model of glycolysis in pancreatic $\beta$-cells \label{fig:Schematic-glycolysis}}

\end{figure}

\begin{figure}
\centering\subfloat[The digraph describing the main mechanism of glycolisis]{\scalebox{.7}{
\begin{tikzpicture}[scale=1,auto=left,every node/.style={circle,draw=black!90,fill=blue!20},   line width=1pt] 

\tikzstyle{ann} = [draw=none,fill=none,right]

\node (P) at (30:4) {Pyr}; 
\node (GP) at (310:4) {$\frac{1}{2}$GIP}; 
\node (H) at (230:4) {H-6-P};
\node (G) at (150:4) {Glu};  
\node[ann]  (in) [above of=G, node distance=2.5cm]{};
\node[ann]  (out) [above of=P, node distance=2.5cm]{};

\begin{scope}[thick,->,>=stealth', every node/.style = {above}, line width=1.5pt]      
 \path  (in) edge [bend left] node[right] {$a$} (G);    
\path (G) edge [bend left] node [left]{$b$} (in);
\path  (G) edge [bend left] node [right] {$c$} (H);    
\path (H) edge [bend left] node [left]{$d$} (G);
\path (H) edge [bend right=15] node[above]  {$e$} (GP);
\path (GP) edge [bend right=20] node[left] {$f$} (P);  
\path (out) edge [bend left] node [right]{$g$} (P);
\path (P) edge [bend left] node [left]{$h$} (out);
\end{scope}
\end{tikzpicture}
}

}\hspace*{2cm}\subfloat[The corresponding complementary digraph $G^{\star}$]{\scalebox{.7}{
\begin{tikzpicture}[scale=1,auto=left,every node/.style={circle,draw=black!90,fill=blue!20},   line width=1pt]  
\tikzstyle{ann} = [draw=none,fill=none,right]

\node (P) at (30:4) {Pyr}; 
\node (GP) at (310:4) {$\frac{1}{2}$GIP}; 
\node (H) at (230:4) {H-6-P};
\node (G) at (150:4) {Glu};  
\node (*) at (90:4) {$\star$};

\begin{scope}[thick,->,>=stealth', every node/.style = {above}, line width=1.5pt]      
 \path  (*) edge [bend left] node[right] {$a$} (G);    
\path (G) edge [bend left] node [left]{$b$} (*);
\path  (G) edge [bend left] node [right] {$c$} (H);    
\path (H) edge [bend left] node [left]{$d$} (G);
\path (H) edge [bend right=20] node[above]  {$e$} (GP);
\path (GP) edge [bend right=25] node[left] {$f$} (P);  
\path (*) edge [bend left=35] node {$g$} (P);
\path (P) edge [bend left=35] node [below] {$h$} (*);
\end{scope}
\end{tikzpicture}
}

}

\caption{Glycolysis in pancreatic $\beta$-cells\label{fig:Glycolysis-in-pancreatic}}
\end{figure}
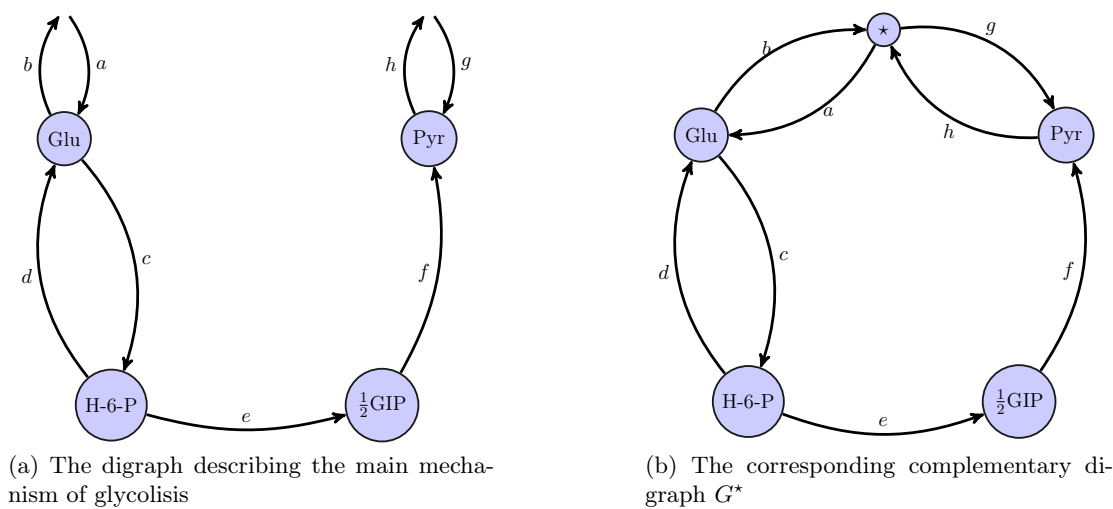

\section{\label{sec:Conclusions}Conclusions and future work}

In our previous work, we have developed a ``linear framework'' for
symbolical computation of equilibrium solutions of Laplacian dynamics,
which has applications in many diverse fields of biology such as enzyme
kinetics, pharmacology and receptor theory, gene regulation, protein
post-translation modification \cite{Gunawardena2012,Gunawardena2014,Ahsendorf2013}.
Our effort here was to extend existing framework for the case when
zeroth order synthesis and first order degradation is added to Laplacian
dynamics. The main motivation came from \cite{Gunawardena2012}, where
the author discusses the addition of synthesis and degradation to
Laplacian dynamics of strongly connected digraph. Here we extended
the proposed framework for arbitrary digraph with synthesis and degradation,
and showed that synthesis and degradation dynamics possesses unique
stable steady state solution under certain necessary and sufficient
conditions. These conditions can be also used to identify whether
given synthesis and degradation dynamics reaches a steady state. Moreover,
as before, we have developed a mathematical framework to compute that
unique ES. Our algorithm uses underlying digraph structure of dynamics
and computer implementation of previous framework \cite{Ahsendorf2013}
can be revised for automatic computations. 

This type of dynamics are frequently encountered in biological literature.
In fact, to illustrate utility of our framework we have applied it
to several examples in biochemistry such as exocytosis cascade of
insulin granules and glucose metabolism in pancreatic $\beta$-cells.
Since computed steady states are exact (not an approximation), they
can be used to check correctness of numerical solutions. On the other
hand, one of the greatest challenges in mathematical modeling is finding
required parameters using given set of experimental data. Yet another
feature of framework is that it can prove useful in parameter estimation
problems. Particularly, one can calibrate computed symbolic ES solutions
to experimental results.

Although the latter example, glucose metabolism in $\beta$-cells,
demonstrates application of framework to nonlinear system of differential
equations, the scope of application of our framework to such nonlinear
systems is limited. Therefore, as our future plan we intend to further
extend framework such that it can be applied to broader range of nonlinear
dynamics.

\section{Acknowledgements}

Funding for this research was supported in part by grants NIH-NIGMS
2R01GM069438-06A2 and NSF-DMS 1225878. The authors would also like
to thank Clay Thompson (Systems Biology Group, Pfizer, Inc.) for his
suggestion of the insulin synthesis example used in Section \ref{sec:Biochemical-Network-Application}.

\printnomenclature[5cm]{}

\bibliographystyle{siam}
\bibliography{mathbioCU}

\end{document}